\def\f        {{\boldsymbol f}}
\def\g        {{\boldsymbol g}}
\def\u        {{\boldsymbol u}}
\def\vv        {{\boldsymbol v}}
\def\w        {{\boldsymbol w}}
\def\x        {{\boldsymbol x}}
\def\B         {{\boldsymbol B}}
\def\LL       {{\boldsymbol L}}
\def\H        {{\boldsymbol H}}
\def\dx{{\rm d}{\boldsymbol x}}
\def\ds{{\rm d}s}
\def\dt{{\rm d}t}
\def\R {{\mathds R}}
\def\S {\boldsymbol {\mathcal{S}}}
\newtheorem{theorem}{Theorem}[section]
\newtheorem{lemma}{Lemma}[section]
\newtheorem{proposition}{Proposition}[section]
\newtheorem{definition}{Definition}[section]
\newtheorem{remark}{Remark}[section]
\begin{document}

\title[Potential singularities for Navier-Stokes solutions]{Two scenarios on a potential smoothness breakdown for the three-dimensional Navier-Stokes equations}

\author[J. V. Gutiérrez-Santacreu]{Juan Vicente Guti\'errez-Santacreu$^\ddag$}
\thanks{$\ddag$ Dpto. de Matemática Aplicada I, E. T. S. I. Informática, Universidad de Sevilla. Avda. Reina Mercedes, s/n. E-41012 Sevilla, Spain.  E-mail: {\tt juanvi@us.es}. This work was partially supported by the Spanish grant No. MTM2015-69875-P from Ministerio de Economía y Competitividad }

\date{\today}

\begin{abstract} 

In this paper we construct two families of initial data being arbitrarily large under any scaling-invariant norm for which their corresponding weak solution to the three-dimensional Navier-Stokes equations become smooth on either $[0,T_1]$ or $ [T_2,\infty)$, respectively, where $T_1$ and $T_2$ are two times prescribed previously. In particular, $T_1$ can be  arbitrarily large and $T_2$ can be arbitrarily small. Therefore, possible formation of singularities would occur after a very long or short evolution time, respectively.  

We further prove that if a large part of the kinetic energy is consumed prior to the first (possible) blow-up time, then the global-in-time smoothness of the solutions follows for the two families of initial data.

\end{abstract}
\maketitle
{\bf 2010 Mathematics Subject Classification.} 35Q30; 35D30; 35D35; 35B44; 35B65. 

{\bf Keywords.} Navier-Stokes equations; weak solutions; strong solutions;  breakdown of smooth solutions; regularity of solutions. 

\section{Introduction}
The Cauchy problem of the Navier-Stokes equations for the flow of a viscous, incompressible, Newtonian fluid can be written as
\begin{equation}\label{PDE-NS}
\left\{
\begin{array}{rcccl}
\partial_t\vv-\Delta\vv+\nabla p+\vv\cdot\nabla\vv&=&\boldsymbol{0}&\mbox{ in }&\R^3\times (0, \infty),
\\
\nabla\cdot\vv&=&0&\mbox{ in }&\R^3\times (0,\infty).
\end{array}
\right.
\end{equation}
Here $\vv$ represents the velocity of the fluid and $p$ its pressure. It should be noted that the density and the viscosity have been normalized, as is always possible, by the rescaling argument on the time and space variable $\u(\frac{\nu t}{\rho}, \frac{\nu \x}{\rho})$ and $\frac{1}{\rho} p(\frac{\nu t}{\rho}, \frac{\nu\x}{\rho})$. 

To these equations we add an initial condition
\begin{equation}\label{IC}
\vv(0)=\vv_0\quad\mbox{ in }\quad\R^3,
\end{equation}
where $\vv_0 $ is a smooth, divergence-free vector field.

Despite considerable effort invested by scientific community, the mechanisms governing the solutions to the three-dimensional Navier-Stokes equations remain unsolved. At the present time, we do not know yet whether smooth solutions to the three-dimensional Navier-Stokes on $\R^3$ exist for all time. In other words, we do not know whether there are initially smooth solutions with finite energy of the Navier-Stokes equations that develop singularities in finite time.   

The mathematical existence theory developed so far supplies only partial answers to the smoothness of the Navier-Stokes equations. It is known that Navier-Stokes solutions are smooth on $[0,\infty)$ provided the initial velocity $\vv_0$ satisfies a smallness condition for certain norm. Instead, if the initial data $\vv_0$ are not assumed to be small, it is known that the time interval of existence is reduced to  $[0,T)$, where $T$ depends badly on some norm of $\vv_0$. 

\subsection{Previous results}

In 1934 Leray in his ground-breaking paper \cite{Leray} established the first result of local- and global-in-time existence of smooth solutions to the three-dimensional Navier-Stokes equations on $\R^3$. More precisely, Leray showed that there was a time interval $[0,T)$ for which $\LL^\infty(\R^3)$-norm solutions existed and hence they were smooth. He also proved that the Navier-Stokes equations had smooth solutions for all time under a smallness condition for $\|\vv_0\|_{\LL^2(\R^3)} \|\nabla\vv_0\|_{\LL^2(\R^3)}$ or $\|\vv_0\|^2_{\LL^2(\R^3)} \|\vv_0\|_{\LL^\infty(\R^3)}$.  Since that time, there has been quite a vast literature addressing local- and global-in-time existence results in different contexts. We will briefly discuss some works for critical spaces, which are those whose associated norm is invariant under the scaling $\lambda\u(\lambda \x, \lambda t^2)$ for all $\lambda>0$.

Fujita and Kato (1964) \cite{Fujita-Kato_1964} established the local- and global-in-time existence of $\overset{.}{\H}^{\frac{1}{2}}(\R^3)$-solutions. Twenty years later Kato \cite{Kato_1984} demonstrated that the three-dimensional Navier-Stokes equations are locally and globally well-posed in the $\LL^3(\R^3)$ space. The smoothness of $\LL^3(\R^3)$-solutions being Leray-Hopf weak solutions is due to Escauriaza, Seregin and Sverak (2003) \cite{Escauriaza-Seregin-Sverak_2003}.  

Afterwards came the work of Cannone (1995) \cite{Cannone_1995} in the Besov spaces $\overset{.}{\B}_{q,\infty}^{-1+3/q}(\R^3)$ for $q<\infty$. The next progress was the work of Koch and Tataru (2001) \cite{Koch_Tataru_2001} in the $BMO^{-1}(\R^3)$ space.  Solving the Navier-Stokes problem in $\overset{.}\B^{-1+3/q}_{q,\infty}(\R^3)$ or $BMO^{-1}(\R^3)$ allowed to construct  highly oscillating initial data $\vv_0$ with  $\|\vv_0\|_{\LL^3(\R^3)}$ being large as long as $\|\vv_0\|_{\overset{.}{\B}_{q,\infty}^{-1+3/q}(\R^3)}$ or $\|\vv_0\|_{BMO^{-1}(\R^3)}$ was small. Moreover, the smallness condition either on $\|\vv_0\|_{\overset{.}{\B}_{q,\infty}^{-1+3/q}(\R^3)}$ or $\|\vv_0\|_{BMO^{-1}(\R^3)}$ led to global $\LL^3(\R^3)$-solutions which combined with being Leray-Hopf solutions implied smoothness globally in time. Finally, we mention the work of Lei and Lin (2014) \cite{Lei_Lin_2011}  who proved the global-in-time well-posedness of solutions in the scaling invariant space
$$
\{\boldsymbol{f}\in\boldsymbol{\mathcal{D}}'(\R^3) : \int_{\R^3} |\boldsymbol{\xi}|^{-1} |\mathcal{F}\boldsymbol{f} (\boldsymbol{\xi})| {\rm d} \boldsymbol{\xi} \},
$$
where $\mathcal{F}$ stands for the Fourier transform. 

A turning point appeared with the result of Bourgain and Pavlovic (2008) \cite{Borgain_Palovic_2008} dealing with the Navier-Stokes problem in $\overset{.}{\B}_{\infty,\infty}^{-1}(\R^3)$. They showed that there were initial data in the Schwartz class $\boldsymbol{\mathcal{S}}(\R^3)$ being arbitrarily small in $\overset{.}{\B}_{\infty,\infty}^{-1}(\R^3)$ whose $\overset{.}{\B}_{\infty,\infty}^{-1}(\R^3)$-solutions become arbitrarily large after an arbitrarily short time. On the contrary, Chemin and Gallagher (2009) \cite{Chemin_Gallagher_2009} showed that there existed global $\overset{.}{\B}_{\infty,\infty}^{-1}(\R^3)$-solutions if a certain nonlinear smallness condition was satisfied. These two last results broke the pattern followed for scaling invariant spaces in the above indicated references -- Global-in-time well-posedness under a linear smallness condition for initial data. Even though Leray  \cite{Leray} already found nonlinear smallness conditions for proving the global-in-time existence of $\LL^\infty(\R^3)$-solutions. In this sense, Robinson and Sadowski (2014)\cite{Robinson_Sadowski_2014} have recently been published a result of local well-posedness under a smallness condition for  $\|\vv_0\|_{\LL^3(\R^3)}\int_0^T \int_{\R^3} |\nabla\u(s)|^2 |\u(s)| \ds$ where $\u(t)$ is the solution of the heat equation with the initial condition $\vv_0$.   

A change in the philosophy of constructing large initial data $\vv_0$ was to look for special structures which allowed to prove global-in-time existence. In this sense, Mahalov and Nicolaenko (2003) \cite{Makhalov_Nikolaenko_2003} constructed large initial data $\vv_0$ which transformed the Navier-Stokes equations into a rotating fluid equation. In such a setting, it is known that Navier-Stokes solutions are globally well-posed. Chemin and Gallagher (2009) \cite{Chemin_Gallagher_2009} proposed initial data which varied slowly in one direction. In these two examples the global well-posedness of two-dimensional Navier-Stokes equations is the crucial point in their proof.

Since our results rely on different ways of perturbing the Navier-Stokes equations for obtaining large solutions, we would like to mention some related works that study the concept of stability of solutions in certain spaces. Gallagher (2001) \cite{Gallagher_2001} proved that, for any sequence of initial data, their corresponding solution can be decomposed into a sum of orthogonal profiles bounded in $\overset{.}\H^\frac{1}{2}(\R^3)$ plus a remainder which is small with respect to the $\LL^3(\Omega)$-norm. As a result, the stability of solutions in $\overset{.}\H^\frac{1}{2}(\R^3)$ is proved for initial data in $\overset{.}\H^\frac{1}{2}(\R^2)\cap \LL^3(\R^3)$ being bounded in $\overset{.}H^\frac{1}{2}(\R^3)$ and providing $\LL^3(\R^3)$-solutions. The space $\LL^3(\R^3)$ could be changed by  $\overset{.}{\B}_{q,\infty}^{-1+3/q}(\R^3)$ or $BMO^{-1}(\R^3)$. This last result was extended, in \cite{Gallagher_Iftimie_Planchon_2003}, by Gallagher, Iftimie and Planchon (2003) to the stability of solutions in $\B^{-1+\frac{3}{p}}_{p,q}(\R^3)$ and  $\LL^3(\R^3)$.

%
%
%
%


\subsection{The contribution of this paper} Let us highlight the main contributions and how they differ from existing works concerning stability. 

In this paper we will construct smooth initial data $\vv_0$ being arbitrarily large in any critical space that do not develop singularities up to a given time $T_1$ without appealing to the two-dimensional Navier-Stokes equations. To achieve such a result we make use of Kato's technique. More precisely, the method of proof is based on mild-solution theory for proving the global-in-time existence of $\LL^3(\R^3)$-solutions for small data $\vv_0$. The main difference is that we do not directly impose a smallness condition on the $\LL^3(\R^3)$-norm for $\vv_0$. In doing so, we decompose the original problem into a Stokes problem with an initial datum $\u_0$ and a perturbed Navier-Stokes-like problem with an initial datum $\w_0$. From these two subproblems, we will prove that the three-dimensional Navier-Stokes problem possesses $\LL^3(\R^3)$-solutions with initial data $\vv_0=\u_0+\w_0$, where $\u_0$ has to be small concerning the $\LL^3(\R^3)$-norm and $\w_0$ has to be small concerning the $\LL^q(\R^3)$-norm. As a consequence, $\vv_0$ is no longer small in any critical space such as $\overset{.}\H^{\frac{1}{
2}}(\R^{2})$, $\LL^3(\R^3)$, $\overset{.}\B_{q,\infty}^{-1+\frac{3}{q}}(\R^3)$ or $BMO^{-1}(\R^3)$. This way we will rule out the smallness conditions for $\vv_0$. The result of Escauriaza,  Seregin, and {\v{S}}ver{\'a}k is the final ingredient to conclude with the construction of large initial data $\vv_0$ for the Navier-Stokes equations which provides smooth solutions on $[0,T_1]$, for $T_1$ being arbitrarily large. Consequently, the formation of potential singularities would have to be after $T_1$. This means that the system would preserve an enough amount of kinetic energy so that the solutions could blow up. On the other hand, if the $\LL^2(\R^2)$-value of the vorticity would keep large without blowing up so that the kinetic energy would decay under a certain threshold on $[0,T_1]$, the solutions starting from our initial data remained smooth for all time.

Moreover, if a different decomposition of (\ref{PDE-NS}) into a Navier-Stokes problem and a perturbed Navier-Stokes-like problem is used, we will be able to prove that there exist  Leray-Hopf weak solutions  becoming smooth on $[T_2,\infty)$ for any given time $T_2$. Then we infer that potential singularities would have to occur on $(0,T_2)$, for $T_2$ being arbitrarily small. The most kinetic energy would be consumed on $(0,T_2)$ so that the solutions can not experience new singularities on $[T_2,\infty)$.

In this paper we do not use the perturbation theory as a way of studying stability of solutions but a way of constructing large solutions to the Navier-Stokes equations. Particularly, if we used the stability theory developed  for some space $X$, with $X$ being  $\overset{.}\H^{\frac{1}{
2}}(\R^{2})$, $\LL^3(\R^3)$, $\overset{.}\B_{q,\infty}^{-1+\frac{3}{q}}(\R^3)$ or $BMO^{-1}(\R^3)$, we would obtain that there exists a number $\varepsilon$ (small enough) such that if $\|\vv_0-\u_0\|\le \varepsilon$ we have  
$$
\|\u(t)-\vv(t)\|_{X}\le E \|\vv_0-\u_0\|_{X}\quad\mbox{ for all }\quad t\in[0,T],
$$                            
where $\varepsilon>0$ and $E>0$ depend on some energy norms of the solution $\u(t)$. This would provide that the perturbed solution $\vv(t)$ would have an initial datum satisfying $\|\vv_0\|_{X}\le \varepsilon+\|\u_0\|_{X}$. But in order for the solution $\u(t)$ to exist on $[0,T]$ one requires some smallness condition for $\u_0$. Then, the solution $\vv(t)$ would inherit a smallness condition for $\vv_0$ and hence would not be large.

\section{Statement of Problem}
\subsection{Notation}
As usual, $\LL^p(\R^3)$, $1\le p\le +\infty$, denotes the space of $p$-integrable, Lebesgue-measurable, $\R^3$-valued functions defined on $\R^3$, and $\H^1(\R^3)$ denotes the space of functions $\vv \in \LL^2(\R^3 )$ such that $\nabla\vv \in \LL^2(\R^3 )$, where $\nabla$ is the gradient operator in the  distributional sense. Moreover, $\boldsymbol{C}_c(\R^3\times (0,T))$ is the space of infinitely continuously differentiable functions with compact supports in $\R^3\times (0,T)$. The Schwartz space is denoted as  $\boldsymbol{\mathcal{S}(\R^3)}$ representing the space of  rapidly decreasing infinitely continuously differentiable functions on $\R^3$.

For $X$ a Banach space,  $L^p(0,T; X)$ denotes the space of $p$-integrable, Bochner-measurable, $X$-valued functions on $(0,T)$. 

We let  $\mathcal{P}$ be the Helmholtz-Leray operator onto the space of divergence-free functions in $\LL^p(\R^3)$ with $1<p<\infty$.  
\subsection{The Navier-Stokes equations} 
 
In this paper the concept of \emph{weak solutions} for the Navier-Stokes problem~(\ref{PDE-NS})--(\ref{IC}) will be understood in the sense of Leray and Hopf (see \cite{Leray, Hopf}). 

\begin{definition}\label{def:weak-solution} A function $\vv(t)$ is said to be a Leray-Hopf weak solution of
problem (\ref{PDE-NS})--(\ref{IC}) if:
\begin{equation}\label{regularity}
\vv\in L^\infty(0,T; \LL^2(\R^3))\cap L^2(0,T; \H^1(\R^3))\quad \mbox{ with }\quad \nabla\cdot\vv=0,
\end{equation}
and
\begin{equation}\label{weak-formulation}
\begin{array}{ll}
\displaystyle -\int_0^T\int_{\R^3}\vv(s,\x)\cdot\partial_t\boldsymbol{\varphi}(s,\x)\dx\,\ds&\displaystyle+\int_0^T\int_{\R^3} \nabla\vv(s,\x):\nabla\boldsymbol{\varphi}(s,\x)\dx\,\ds
\\
&\displaystyle+\int_0^T
\int_{\R^3}\vv(s,\x)\cdot\nabla\vv(s,\x)\cdot\boldsymbol{\varphi}(s,\x)\dx\,\ds=(\u_0, \boldsymbol{\varphi}(0)),  
\end{array}
\end{equation}
for all $\boldsymbol{\varphi}\in\boldsymbol{C}^\infty_c (\R^3\times [0,T))$ with $\nabla\cdot \boldsymbol{\varphi}=0$. Moreover, the energy
inequality
\begin{equation}\label{Energy}
\frac{1}{2}\|\vv(t)\|^2_{\LL^2(\R^3)}+\int_{0}^{t}\|\nabla\vv(s)\|^2_{\LL^2(\R^3)}\,\ds\le
\frac{1}{2}\|\vv_0\|^2_{\LL^2(\R^3)}
\end{equation}
holds  a. e.~in $[0,T]$.
\end{definition}
Leray proved the global-in-time existence of weak solutions \cite{Leray}.  
\begin{theorem}\label{Leray} Let $\vv_0\in \LL^2(\R^3)$ be a divergence-free vector field. Then there exists at least a Leray-Hopf weak solution to (\ref{PDE-NS})--(\ref{IC}) on $[0, T]$.
\end{theorem}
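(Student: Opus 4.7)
The plan is to construct weak solutions by mollifying the nonlinearity, deriving uniform a priori energy estimates, and then passing to the limit by compactness. First I would regularize the convective velocity: fix a standard mollifier $\rho_\varepsilon$ and solve
\begin{equation*}
\partial_t\vv_\varepsilon-\Delta\vv_\varepsilon+\nabla p_\varepsilon+(\rho_\varepsilon*\vv_\varepsilon)\cdot\nabla\vv_\varepsilon=\boldsymbol{0},\qquad \nabla\cdot\vv_\varepsilon=0,
\end{equation*}
with initial datum $\vv_\varepsilon(0)=\rho_\varepsilon*\vv_0$. Since the advection field $\rho_\varepsilon*\vv_\varepsilon$ is Lipschitz in space whenever $\vv_\varepsilon\in\LL^2(\R^3)$, a Picard iteration in the mild-solution framework (using the heat semigroup composed with the Helmholtz-Leray projector $\mathcal{P}$) produces a unique smooth solution $\vv_\varepsilon$ on some maximal interval $[0,T_\varepsilon)$.

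Testing the regularized equation against $\vv_\varepsilon$ and exploiting $\nabla\cdot(\rho_\varepsilon*\vv_\varepsilon)=0$ to annihilate the convective contribution yields the energy identity
\begin{equation*}
\frac{1}{2}\|\vv_\varepsilon(t)\|^2_{\LL^2(\R^3)}+\int_0^t\|\nabla\vv_\varepsilon(s)\|^2_{\LL^2(\R^3)}\ds=\frac{1}{2}\|\rho_\varepsilon*\vv_0\|^2_{\LL^2(\R^3)}\le\frac{1}{2}\|\vv_0\|^2_{\LL^2(\R^3)}.
\end{equation*}
This bound is uniform in $\varepsilon$, forces $T_\varepsilon=\infty$, and bounds $\vv_\varepsilon$ in $L^\infty(0,T;\LL^2(\R^3))\cap L^2(0,T;\H^1(\R^3))$. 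Extracting a subsequence, I obtain a limit $\vv$ with $\vv_\varepsilon\to\vv$ weakly-star in the first space and weakly in the second.

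The main obstacle is passing to the limit in the quadratic nonlinearity, for which weak convergence does not suffice. I would establish strong convergence in $L^2(0,T;\LL^2_{\mathrm{loc}}(\R^3))$ via the Aubin-Lions lemma. Using the regularized equation together with Ladyzhenskaya's interpolation $\|\vv_\varepsilon\|_{\LL^4}\lesssim\|\vv_\varepsilon\|^{1/4}_{\LL^2}\|\nabla\vv_\varepsilon\|^{3/4}_{\LL^2}$, one obtains a uniform bound on $\partial_t\vv_\varepsilon$ in $L^{4/3}(0,T;\H^{-1}(B_R))$ for every ball $B_R$. The compact embedding $\H^1(B_R)\hookrightarrow\LL^2(B_R)$ then yields strong local convergence on each ball, and a diagonal extraction over an exhausting sequence $B_R\uparrow\R^3$ handles the unboundedness of the domain. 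With this in hand, $(\rho_\varepsilon*\vv_\varepsilon)\otimes\vv_\varepsilon$ converges to $\vv\otimes\vv$ in the sense of distributions against any compactly supported test function.

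Finally, I would verify the weak formulation \eqref{weak-formulation} by writing the approximate equation in distributional form against $\boldsymbol{\varphi}\in\boldsymbol{C}^\infty_c(\R^3\times[0,T))$ with $\nabla\cdot\boldsymbol{\varphi}=0$, which annihilates the pressure, and passing to the limit term by term using the weak and strong convergences above. The energy inequality \eqref{Energy} follows from the $\varepsilon$-level identity by weak lower semi-continuity of the $\LL^2$- and $L^2\H^1$-norms together with the strong convergence $\rho_\varepsilon*\vv_0\to\vv_0$ in $\LL^2(\R^3)$. The delicate point, as emphasized, is the compactness argument on the unbounded domain, which is why localization and exhaustion are essential.
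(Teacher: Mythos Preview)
The paper does not prove this theorem at all: it is stated as a classical result and attributed to Leray's 1934 paper \cite{Leray}, with no argument given. Your sketch is the standard Leray construction---mollified convective velocity, energy identity for the regularized system, uniform bounds in $L^\infty(0,T;\LL^2)\cap L^2(0,T;\H^1)$, Aubin--Lions compactness localized on balls, and lower semicontinuity for the energy inequality---and it is correct; indeed the paper itself invokes the same regularization $(\rho_\varepsilon*\vv_\varepsilon)\cdot\nabla\vv_\varepsilon$ later in Section~3 to justify its formal computations.
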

Next we introduce the concept of strong (or regular) solutions to (\ref{PDE-NS})-(\ref{IC}).
\begin{definition}
A weak solution $\vv(t)$ to problem (\ref{PDE-NS})--(\ref{IC}) is said to be a strong solution if there exists a number $M_\vv>0$ such that
$$
\sup_{t\in[0, T] } \|\nabla\vv\|_{\LL^2(\R^3)}\le M_\vv.
$$  
\end{definition}
The key point for proving that solutions to the Navier-Stokes equations are smooth is to obtain that Leray-Hopf weak solutions are strong indeed, of course, for smooth initial data. 

Here we announce our two main results.

\begin{theorem}\label{Main} Let $T>1$ be given. Then there exist smooth, divergence-free initial data $\vv_0$ arbitrarily large under any critical norm such that their corresponding Leray-Hopf solution $\vv(t)$ to (\ref{PDE-NS})--(\ref{IC}) is smooth on $[0,T]$.
\end{theorem}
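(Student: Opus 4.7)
The plan is to decompose both the initial datum and the unknown solution as $\vv_0=\u_0+\w_0$ and $\vv=\u+\w$, where $\u$ solves the linear Stokes problem started from $\u_0$, so that $\w=\vv-\u$ must solve the perturbed Navier--Stokes--like system
\begin{equation*}
\partial_t\w-\Delta\w+\nabla p+\mathcal{P}\bigl[\w\cdot\nabla\w+\u\cdot\nabla\w+\w\cdot\nabla\u+\u\cdot\nabla\u\bigr]=\boldsymbol{0},\quad\nabla\cdot\w=0,\quad\w(0)=\w_0.
\end{equation*}
Fix $q>3$ once and for all. The strategy is to solve this $\w$-equation in $\LL^q(\R^3)$ by Kato's mild-solution scheme on the whole prescribed interval $[0,T]$, to recombine $\u$ and $\w$ into a Leray--Hopf weak solution $\vv$ that lies in $L^\infty(0,T;\LL^3(\R^3))$, and then to invoke the regularity theorem of Escauriaza, Seregin and {\v{S}}ver{\'a}k recalled in the introduction to upgrade $\vv$ to a smooth solution on $[0,T]$.

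For the mild-solution step I would work in a time-weighted Kato-type Banach space whose norm controls $\sup_{t\in[0,T]}t^{(1-3/q)/2}\|\w(t)\|_{\LL^q(\R^3)}$ together with a companion time-integrated norm. Heat-kernel estimates imply that $\u(t)=e^{t\Delta}\mathcal{P}\u_0$ is small in all these norms as soon as $\|\u_0\|_{\LL^3(\R^3)}$ is small, so that the two cross terms $\u\cdot\nabla\w$, $\w\cdot\nabla\u$ and the forcing term $\u\cdot\nabla\u$ act as genuinely small perturbations of the standard Navier--Stokes mild framework; a smallness hypothesis on $\|\w_0\|_{\LL^q(\R^3)}$ then closes the contraction argument and produces $\w$ on $[0,T]$. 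Propagating the (possibly large but finite) $\LL^3$-mass of $\w_0$ along the same mild formula yields $\w\in L^\infty(0,T;\LL^3(\R^3))$, and together with $\u\in L^\infty(0,\infty;\LL^3(\R^3))$ this gives $\vv\in L^\infty(0,T;\LL^3(\R^3))$; weak--strong uniqueness ensures that this constructed $\vv$ coincides with the Leray--Hopf weak solution provided by Theorem \ref{Leray}.

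The step I expect to be the main obstacle is the explicit construction of smooth, divergence-free $\u_0,\w_0\in\S(\R^3)$ fulfilling the two smallness constraints above and yet making $\|\u_0+\w_0\|_X$ arbitrarily large in every critical norm $X$ from the list $\overset{.}\H^{1/2}(\R^3)$, $\LL^3(\R^3)$, $\overset{.}\B_{q,\infty}^{-1+3/q}(\R^3)$, $BMO^{-1}(\R^3)$. The guiding idea is the scaling behaviour of these norms under dilation: taking $\w_0=\eta\boldsymbol{\phi}(\cdot/R)$ for some fixed Schwartz profile $\boldsymbol{\phi}$, the relation $\|\w_0\|_{\LL^3}/\|\w_0\|_{\LL^q}\sim R^{1-3/q}$ lets the $\LL^q$-mass stay beneath the Kato threshold while the $\LL^3$, $\overset{.}\H^{1/2}$, $\overset{.}\B^{-1+3/q}_{q,\infty}$ and $BMO^{-1}$ masses all blow up at the common rate $R^{1-3/q}\to\infty$, and a piece $\u_0$ of $\LL^3$-amplitude below the Kato threshold is added only to fine-tune the profile. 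The delicate point, once the smallness thresholds have been pinned down by the mild-solution step (they depend quantitatively on $T$), is to verify that the free dilation parameter $R$ can indeed be pushed large enough for the critical-norm lower bound on $\vv_0$ to exceed any preassigned constant simultaneously in every critical norm under consideration.
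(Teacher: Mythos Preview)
Your overall architecture---split $\vv=\u+\w$ with $\u$ the Stokes flow, run a Kato-type contraction for $\w$ in a time-weighted $\LL^q$ space, propagate an $\LL^3$ bound, invoke Escauriaza--Seregin--\v{S}ver\'ak---is exactly the paper's. Where you diverge is in \emph{which piece carries the large critical norm and which piece satisfies the smallness.} In the paper the smallness on $\u_0$ is placed on the \emph{non-critical} quantities $\|\u_0\|_{\LL^q}$ and $\|\nabla\u_0\|_{\LL^3}$ (condition (\ref{sec3-lm1-smallness_1})); these can be driven to zero by the Navier--Stokes rescaling $\tilde\lambda\u_0(\tilde\lambda\cdot)$ with $\tilde\lambda\to 0$ while $\|\u_0\|_{\LL^3}$ stays invariant and large. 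The piece $\w_0$ is the one assumed small in the critical norm $\LL^3$ (condition (\ref{sec3-lm1-smallness_2})). You do the opposite: $\u_0$ is small in $\LL^3$ (hence essentially dispensable---your argument would go through with $\u_0=\boldsymbol{0}$), and $\w_0$ is small in $\LL^q$ but large in $\LL^3$ via a low-frequency dilation.

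Your version is sound: with $\u_0=\boldsymbol{0}$ it collapses to the $\LL^q$ local-existence theory (lifespan $\gtrsim \|\w_0\|_{\LL^q}^{-2q/(q-3)}$) followed by a linear $\LL^3$ propagation, and the dilation $\eta\boldsymbol{\phi}(\cdot/R)$ does make $\|\w_0\|_{\LL^q}$ small while every critical norm scales like $\eta R\to\infty$. What the paper's assignment buys, and yours does not, is the ``scissors effect'' highlighted there: by rescaling $\u_0$ and $\w_0$ in \emph{opposite} directions ($\tilde\lambda\to 0$, $\hat\lambda\to\infty$) one simultaneously forces norms below the scaling (e.g.\ $\LL^2$) and above it (e.g.\ $\dot\H^1$, $\LL^q$) to be large, whereas your single-direction dilation $R\to\infty$ sends some of the latter (for instance $\|\nabla\w_0\|_{\LL^2}$ when $3<q<6$) to zero. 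This is irrelevant for the theorem as stated, which only concerns critical norms, but it explains why the paper sets up the decomposition the way it does.
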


\begin{theorem}\label{Main2}
Let $0<T<1$ be given. Then there exist initial data $\vv_0$ arbitrarily large under any critical norm such that there exists at least a Leray-Hopf solution $\vv(t)$ to (\ref{PDE-NS})--(\ref{IC}) which is smooth on $[T,\infty)$.
\end{theorem}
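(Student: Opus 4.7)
My plan runs parallel to that of Theorem~\ref{Main} but with a decomposition of (\ref{PDE-NS}) designed so that the dynamics dissipates the large part of the kinetic energy before time $T$. I would write $\vv_0=\u_0+\w_0$ and look for $\vv=\u+\w$, where $\u$ is a global smooth solution of the honest Navier--Stokes system with datum $\u_0$, and $\w$ satisfies the perturbed Navier--Stokes-like problem
\begin{equation*}
\partial_t\w-\Delta\w+\u\cdot\nabla\w+\w\cdot\nabla\u+\w\cdot\nabla\w+\nabla q=\boldsymbol{0},\quad \nabla\cdot\w=0,\quad \w(0)=\w_0.
\end{equation*}
I would take $\u_0$ smooth, divergence-free and small in the critical $\LL^3(\R^3)$ norm, so that Kato's theorem yields a unique $\u\in L^\infty(0,\infty;\LL^3(\R^3))$ with $\|\u(t)\|_{\LL^3(\R^3)}$ small for all $t\ge 0$ (and actually tending to zero as $t\to\infty$). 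For $\w_0$ I would mimic the construction of Theorem~\ref{Main}: smooth, divergence-free, small in a suitable subcritical $\LL^q(\R^3)$ norm, but arranged by an oscillation/concentration scaling so that $\|\w_0\|_{\LL^3}$, $\|\w_0\|_{\overset{.}{\H}^{1/2}}$, $\|\w_0\|_{\overset{.}{\B}^{-1+3/q}_{q,\infty}}$, $\|\w_0\|_{BMO^{-1}}$, etc., can be made arbitrarily large, so that the same holds for $\vv_0=\u_0+\w_0$.

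For the perturbed system I would run a Kato-type mild-solution fixed point on $[0,T]$ from the Duhamel representation
\begin{equation*}
\w(t)=e^{t\Delta}\w_0-\int_0^t e^{(t-s)\Delta}\mathcal{P}\nabla\cdot\bigl[\u\otimes\w+\w\otimes\u+\w\otimes\w\bigr](s)\,\ds.
\end{equation*}
The linear coupling to $\u$ is absorbed by the smallness of $\|\u\|_{L^\infty(0,\infty;\LL^3)}$ through the standard bilinear lemma, and the $\w\otimes\w$ term is handled as in Theorem~\ref{Main}. The key point, which replaces the Stokes estimates used in Theorem~\ref{Main}, is to exploit the heat-semigroup smoothing of $\w_0$ together with its smallness in $\LL^q(\R^3)$ so that, although $\|e^{t\Delta}\w_0\|_{\LL^3}$ can be large in a very short initial layer, it drops to an arbitrarily small value by time $T$. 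The fixed point then produces $\w\in C([0,T];\LL^3(\R^3))$ with $\|\w(T)\|_{\LL^3(\R^3)}\le\varepsilon$ for any prescribed $\varepsilon>0$.

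With $\|\vv(T)\|_{\LL^3(\R^3)}\le\|\u(T)\|_{\LL^3(\R^3)}+\|\w(T)\|_{\LL^3(\R^3)}$ arbitrarily small, Kato's theorem restarted from $\vv(T)$---equivalently, the Escauriaza--Seregin--{\v{S}}ver\'{a}k criterion applied at time $T$---shows that the Leray--Hopf solution produced by this construction, whose existence on $[0,\infty)$ is supplied by Theorem~\ref{Leray}, is smooth on $[T,\infty)$. The main obstacle is the mild-solution step for $\w$: since $\w_0$ is \emph{not} small in any critical norm, a direct Kato contraction does not close on the full interval $[0,T]$, and one must delicately balance the smallness of $\|\w_0\|_{\LL^q(\R^3)}$ and the heat-kernel smoothing of its concentrated/oscillatory structure against the short time scale $T$, all the while controlling the coupling with $\u$---which, unlike in Theorem~\ref{Main}, is governed by a genuine nonlinear Navier--Stokes flow rather than a Stokes flow, so critical-norm decay estimates for small-data Kato solutions become essential to keep the source $\u\cdot\nabla\w+\w\cdot\nabla\u$ subordinate to dissipation.
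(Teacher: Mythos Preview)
Your decomposition matches the paper's: one piece (your $\u$, the paper's $\w$) solves the full Navier--Stokes system with small $\LL^3$ datum so that Kato's theorem furnishes a global smooth solution with uniformly small $\LL^3$-norm, and the ``large'' piece (your $\w$, the paper's $\u$) goes into the perturbed equation. The final step---finding a time $t^*\le T$ at which $\|\vv(t^*)\|_{\LL^3}$ is small and then restarting Kato---is also the same.

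The gap is exactly where you flag it: the mild-solution step for the large piece. A Kato-type contraction cannot close on $[0,T]$ when $\w_0$ is large in every critical norm, regardless of any auxiliary $\LL^q$ smallness. If $q>3$ the heat semigroup gives no decay of $\|e^{t\Delta}\w_0\|_{\LL^3}$ at all, so your claim that it ``drops to an arbitrarily small value by time $T$'' is simply false. If $q<3$ the bound $\|e^{t\Delta}\w_0\|_{\LL^3}\le Ct^{-\frac{3}{2}(\frac{1}{q}-\frac{1}{3})}\|\w_0\|_{\LL^q}$ does eventually decay, but it blows up at $t=0$; on the initial layer the Kato norms of the linear evolution are unbounded, the quadratic term $\w\otimes\w$ is uncontrollable, and no fixed point can be set up. The sentence ``the fixed point then produces $\w\in C([0,T];\LL^3)$'' is precisely the step that fails.

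The paper's remedy is not to refine the mild-solution argument but to abandon it for the large piece and use an \emph{energy} estimate instead. Multiplying the perturbed equation by $\w$ and integrating, the only dangerous cross term is
\[
\int_{\R^3} \w\cdot\nabla\u\cdot\w\,\dx \;=\; -\int_{\R^3} \w\cdot\nabla\w\cdot\u\,\dx \;\le\; C\,\|\u\|_{\LL^3(\R^3)}\|\nabla\w\|_{\LL^2(\R^3)}^2,
\]
which is absorbed into the dissipation because $\|\u(t)\|_{\LL^3}$ is uniformly small. This yields the Leray-type bound $\|\w\|_{L^\infty(0,T;\LL^2)}^2+\|\nabla\w\|_{L^2(0,T;\LL^2)}^2\le C\|\w_0\|_{\LL^2}^2$. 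The correct smallness hypothesis on the large piece is therefore \emph{supercritical}, namely $T^{-1/4}\|\w_0\|_{\LL^2}$ small, not a subcritical $\LL^q$ condition. Interpolation $\|\w\|_{\LL^3}\le C\|\w\|_{\LL^2}^{1/2}\|\nabla\w\|_{\LL^2}^{1/2}$ gives $\int_0^T\|\w\|_{\LL^3}^4\,\dt\le C\|\w_0\|_{\LL^2}^4$, hence $\inf_{[0,T]}\|\w\|_{\LL^3}\le CT^{-1/4}\|\w_0\|_{\LL^2}$, which produces the time $t^*$. The scaling that realises this---keeping $\|\w_0\|_{\LL^3}$ arbitrarily large while forcing $\|\w_0\|_{\LL^2}$ small---is $\w_0^\lambda(\x)=\lambda\w_0(\lambda\x)$ with $\lambda\to\infty$, the opposite direction from the one used in Theorem~\ref{Main}.
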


Throughout this paper, different positive constants will appear due to interpolations and embeddings among spaces. Thus, $C$ will always be the maximum of all of these constants in the previous steps, and $K$ and $K'$ will stand for constants depending on the initial data.

\section{Proof of Theorem \ref{Main}}
In proving Theorem \ref{Main} we need to introduce a suitable approximation procedure so that all the estimates that follow are rigorously set up. To do this, we use a regularization \emph{à la Leray}. That is, we replace the nonlinearity $\vv\cdot\nabla \vv$ by $(\rho_\varepsilon*\vv)\cdot \nabla\vv$, where $\rho\in C_c^\infty(\R^3)$ such that $\rho\ge0$ and $\int_{\R^3} \rho(\x)\,\dx=1$ and $\rho_\varepsilon(\x)=\frac{1}{\varepsilon^2} \rho(\frac{\x}{\varepsilon})$ for all $\varepsilon>0$, to get 
\begin{equation}\label{NS-regularized}
\left\{
\begin{array}{rcccl}
\partial_t\vv_\varepsilon-\Delta\vv_\varepsilon+\nabla p_\varepsilon+(\rho_\varepsilon*\vv_\varepsilon)\cdot\nabla\vv_\varepsilon&=&\boldsymbol{0}&\mbox{ in }&\R^3\times (0, \infty),
\\
\nabla\cdot\vv_\varepsilon&=&0&\mbox{ in }&\R^3\times (0,\infty),
\end{array}
\right.
\end{equation}
associated with the regularized initial condition $\vv_\epsilon(0)=\vv_0$. This procedure gives rise to a solution pair $(\vv_\varepsilon, p_\varepsilon)\in \boldsymbol{C}^\infty(\R^3\times[0,\infty))\times C^{\infty}(\R^3\times[0,\infty))$.  On dealing with above equations, it is preferably better to avoid the pressure.  For this, we apply the Helmholtz-Leray  operator $\mathcal{P}$ to (\ref{NS-regularized}) to get  
\begin{equation}\label{NS}
\left\{
\begin{array}{rcl}
\displaystyle
\partial_t\vv_\varepsilon-\Delta\vv_\varepsilon + \mathcal{P}((\rho_\varepsilon*\vv_\varepsilon)\cdot\nabla\vv_\varepsilon)=\boldsymbol{0},
\\
\vv_\varepsilon(0)=\vv_0,
\end{array}
\right.
\end{equation}
where we have utilized the fact that $-\mathcal{P}\Delta\vv=-\Delta \mathcal{P}\vv=-\Delta\vv$ since $\mathcal{P}$ commutes with derivatives of any order.

From now on, for simplicity in exposition, we handle (\ref{NS}) without regularizing, although it must be taken into account in order to justify all the computations in this work. 

Our first step is to modify  equation (\ref{NS}) in order to easily produce a family of global smooth solutions. 
We first decompose (\ref{NS}) into two subproblems: a Stokes problem and a Navier-Stokes-like perturbation as follows.  Let $\u$ be the solution to the Stokes problem
\begin{equation}\label{Stokes}
\left\{
\begin{array}{rcl}
\partial_t\u- \Delta\u &=&\boldsymbol{0}, 
\\
\u(0)&=&\u_0,
\end{array}
\right.
\end{equation}
and let $\w$ be the solution to the perturbation problem 
\begin{equation}\label{Perturbation}
\left\{
\begin{array}{rcl}
\displaystyle
\partial_t\w - \Delta\w+\mathcal{P}(\u\cdot\nabla\w)+\mathcal{P}(\w\cdot\nabla\u)
+\mathcal{P}(\w\cdot\nabla\w)+\mathcal{P}(\u\cdot\nabla\u)&=&\boldsymbol{0},
\\
\w(0)&=&\w_0.
\end{array}
\right.
\end{equation}
Observe that defining $\vv=\u+\w$ and adding (\ref{Stokes}) and (\ref{Perturbation}), we  obtain (\ref{NS}) for $\vv_0=\u_0+\w_0$. In order to prove our main result, we need to write (\ref{Stokes}) and (\ref{Perturbation}), by using the Fourier transform, as 
\begin{equation}\label{Stokes_Heat}
\u(t)=K_t*\u_0
\end{equation}
and
\begin{equation}\label{Perturbation_Heat}
\w(t)=K_t*\w_0+\int_0^t K_{t-s} * (\mathcal{P}(\w\cdot\nabla\w)+\mathcal{P}(\u\cdot\nabla\w)+\mathcal{P}(\w\cdot\nabla\w)+\mathcal{P}(\u\cdot\nabla\u))  \ds,
\end{equation}  
where  $K_t=\frac{1}{(4\pi t)^\frac{3}{2}} e^{-\frac{|\x|^2}{4t}}$, for all $t>0$, is the heat kernel.

At this point we emphasize that, from (\ref{Stokes_Heat}) and (\ref{Perturbation_Heat}), we obtain the Duhamel integral form of (\ref{NS}):
\begin{equation}\label{Duhamel} 
\vv(t)=K_t*\vv_0+\int_0^t K_{t-s} * (\mathcal{P}(\vv\cdot\nabla\vv))  \ds,
\end{equation}
with $\vv_0=\u_0+\w_0$. The equivalence between equations (\ref{NS}) and (\ref{Duhamel}) and equations (\ref{Stokes_Heat}) and (\ref{Perturbation_Heat}) are ensured due to the regularity of $\vv$ or, more precisely, $\vv_\varepsilon$.

The following proposition is concerned with some properties of $K_t$. The proof is straightforward by using the properties of the convolution operator and the particular structure of $K_t$. 
\begin{proposition}\label{Pro1} It follows that, for all $1<p\le q <\infty$,
\begin{equation}\label{Inv_Lq-Lp}
\|K_t * \f\|_{\LL^q(\mathds{R}^3)}\le C t ^{-(\frac{1}{p}-\frac{1}{q})\frac{3}{2}} \|\f\|_{\LL^p(\mathds{R}^3)},
\end{equation}
\begin{equation}\label{Inv_Grad_Lq-Lp} 
\|\nabla K_t *\f \|_{\LL^q(\mathds{R}^3)}\le C t ^{-(1+\frac{3}{p}-\frac{3}{q})\frac{1}{2}} \|\f\|_{\LL^p(\mathds{R}^3)},
\end{equation}
where $C>0$ is a constant that does not depend on $\f$.  
\end{proposition}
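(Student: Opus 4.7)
The plan is to reduce both inequalities to Young's convolution inequality combined with explicit $\LL^r$-estimates on $K_t$ and $\nabla K_t$. I would pick $r\in[1,\infty)$ determined by the Young relation $1+\tfrac{1}{q}=\tfrac{1}{p}+\tfrac{1}{r}$; the hypothesis $1<p\le q<\infty$ guarantees that such an $r$ exists (since $0\le \tfrac{1}{p}-\tfrac{1}{q}<1$). Young's inequality then gives
\[
\|K_t*\f\|_{\LL^q(\R^3)}\le \|K_t\|_{\LL^r(\R^3)}\|\f\|_{\LL^p(\R^3)},
\]
and likewise with $\nabla K_t$ in place of $K_t$. Everything then comes down to evaluating $\|K_t\|_{\LL^r(\R^3)}$ and $\|\nabla K_t\|_{\LL^r(\R^3)}$ exactly.

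To compute $\|K_t\|_{\LL^r(\R^3)}$ I would start from the Gaussian expression $K_t(\x)=(4\pi t)^{-3/2}e^{-|\x|^2/(4t)}$ and apply the dimensionless substitution $\y=\x\sqrt{r/(4t)}$: this turns $\int_{\R^3} K_t(\x)^r\,\dx$ into a pure Gaussian constant times $t^{-3(r-1)/2}$, yielding $\|K_t\|_{\LL^r(\R^3)}=C\,t^{-\tfrac{3}{2}(1-1/r)}$. The identity $1-1/r=1/p-1/q$ then produces the first inequality. For the gradient, I would use $\nabla K_t(\x)=-\tfrac{\x}{2t}K_t(\x)$, so that $|\nabla K_t(\x)|^r$ carries an extra factor $(|\x|/(2t))^r$; the same rescaling reduces the integral to $\int_{\R^3}|\y|^r e^{-|\y|^2}\dy$ (a dimensionless constant) multiplied by an explicit power of $t$. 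Collecting exponents gives $\|\nabla K_t\|_{\LL^r(\R^3)}=C\,t^{-\tfrac{1}{2}(1+3/p-3/q)}$, from which the second inequality follows by Young.

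There is essentially no substantive obstacle here; the argument is purely computational, and even the ``hard'' part is only the bookkeeping of powers of $t$ in the gradient estimate (the factor $|\x|^r$ contributes an extra $t^{r/2}$ after the Gaussian substitution, which must be tracked against the explicit prefactors $(2t)^{-r}$ and $(4\pi t)^{-3r/2}$). Rewriting the resulting exponent of $t$ in terms of $p$ and $q$ via $1/r=1+1/q-1/p$ yields the stated scaling. The constants $C$ depend only on $p$ and $q$ (through the Gaussian moments) and not on $\f$ or $t$, which is what the statement requires.
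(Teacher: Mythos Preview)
Your proposal is correct and follows essentially the same route as the paper: apply Young's convolution inequality with the exponent $r$ determined by $1+\tfrac{1}{q}=\tfrac{1}{p}+\tfrac{1}{r}$, then compute $\|K_t\|_{\LL^r(\R^3)}$ and $\|\nabla K_t\|_{\LL^r(\R^3)}$ by Gaussian scaling. The paper's proof simply asserts the values of these $\LL^r$ norms, whereas you sketch the explicit substitution $\y=\x\sqrt{r/(4t)}$ and the identity $\nabla K_t=-\tfrac{\x}{2t}K_t$; your added detail is welcome but does not change the underlying argument.
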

\begin{proof} We will use the following property for the convolution operator:
$$
\|\f * \g\|_{\LL^q(\R^3)}\le \|\f\|_{\LL^r(\R^3)} \|\g\|_{\LL^p(\R^3)}\quad\mbox{ for }\quad\frac{1}{q}+\frac{1}{r}=1+\frac{1}{q}.
$$
Then we have
$$
\|K_t * \f\|_{\LL^q(\mathds{R}^3)}\le \|K_t\|_{\LL^r(\R^3)} \|\f\|_{\LL^p(\R^3)}.
$$
The proof of (\ref{Inv_Lq-Lp}) follows by observing that $\|K_t\|_{L^r(\R^3)}\le C t^{-(\frac{1}{p}-\frac{1}{q})}$. In the same way, we obtain that (\ref{Inv_Grad_Lq-Lp}) holds since $\|\nabla K_t\|_{\LL^r(\R^3)}\le C t ^{-(1+\frac{3}{p}-\frac{3}{q})\frac{1}{2}}$. 

\end{proof}

By Hölder's inequality and  Hodge's decomposition, we have
\begin{equation}\label{Interp_NL}
\|\mathcal{P}(\vv\cdot\nabla\vv)\|_{\LL^p(\mathds{R}^3)}\le C \|\vv\|_{\LL^r(\mathds{R}^3)} \|\nabla\vv\|_{\LL^s(\mathds{R}^3)}.
\end{equation}
for $\frac{1}{p}=\frac{1}{r}+\frac{1}{s}$.
  
From now on, we will assume $3<q$ and $\frac{1}{p}=\frac{1}{q}+\frac{1}{3}$ which implies that $3>p>\frac{3}{2}$. 

We will denote
$$
\beta(a, b)=\int_0^1 \gamma^{a-1} (1-\gamma)^{b-1}
$$ 
for all $a, b>0$. 

Next we provide some estimates for the solution to problem (\ref{Perturbation}) under a certain smallness condition for $\u_0$ and $\w_0$, respectively. 
\begin{lemma}\label{sec3-lm1} Let $T>1$ be given, and let  $\u_0\in \S (\R^3) $ and $\w_0\in \S(\R^3)$ be two divergence-free vector fields. Then there exists $K>0$ such that if
\begin{equation}\label{sec3-lm1-smallness_1}
T^{\frac{1}{2}} \max\{ \|\u_0\|_{\LL^q(\R^3)}, \|\nabla\u_0\|_{\LL^3(\R^3)} \}< \frac{K}{2^{\frac{1}{2}}4 } 
\end{equation}
and 
\begin{equation}\label{sec3-lm1-smallness_2}
 \|\w_0\|_{\LL^3(\mathds{R}^3)}< \frac{K}{2^{\frac{1}{2}} 4},
\end{equation} 
we have 
\begin{equation}\label{sec3-lm1_Lq}
t^{\frac{1}{2}(1-\frac{3}{q})} \|\w(t)\|_{\LL^q(\mathds{R}^3)}\le K
\end{equation}
and 
\begin{equation}\label{sec3-lm1_Grad_L3}
t^{\frac{1}{2}} \|\nabla\w(t)\|_{\LL^3(\mathds{R}^3)}\le K
\end{equation}
for all $t\in [0,T]$. 
\end{lemma}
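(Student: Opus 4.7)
The plan is to close a standard Kato-type contraction/bootstrap argument on the Duhamel representation (\ref{Perturbation_Heat}). I introduce the time-weighted quantities
$$M(t)=\sup_{s\in(0,t]} s^{\frac{1}{2}(1-\frac{3}{q})}\|\w(s)\|_{\LL^q(\R^3)},\qquad N(t)=\sup_{s\in(0,t]} s^{\frac{1}{2}}\|\nabla\w(s)\|_{\LL^3(\R^3)},$$
and aim to show $M(t),N(t)\le K$ on $[0,T]$. Because $\u(s)=K_s*\u_0$ and $\|K_s\|_{\LL^1(\R^3)}=1$, Young's inequality yields the uniform-in-$s$ bounds $\|\u(s)\|_{\LL^q(\R^3)}\le\|\u_0\|_{\LL^q(\R^3)}$ and $\|\nabla\u(s)\|_{\LL^3(\R^3)}\le\|\nabla\u_0\|_{\LL^3(\R^3)}$, which will control every appearance of $\u$ inside the Duhamel integral.

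Taking the $\LL^q(\R^3)$-norm of (\ref{Perturbation_Heat}), the linear piece is handled by Proposition \ref{Pro1}, which gives $\|K_t*\w_0\|_{\LL^q(\R^3)}\le Ct^{-\frac{1}{2}(1-\frac{3}{q})}\|\w_0\|_{\LL^3(\R^3)}$. For each of the four nonlinear terms I take $p$ with $\frac{1}{p}=\frac{1}{q}+\frac{1}{3}$, so that (\ref{Inv_Lq-Lp}) produces the decay $(t-s)^{-1/2}$, while (\ref{Interp_NL}) splits the nonlinearity into precisely the norms encoded in $M$, $N$ and the $\u_0$-bounds above. Substituting $s=t\sigma$ turns the time integrals into Beta functions $\beta(\tfrac{3}{2q},\tfrac{1}{2})$, $\beta(\tfrac{1}{2},\tfrac{1}{2})$, $\beta(\tfrac{1}{2}+\tfrac{3}{2q},\tfrac{1}{2})$, all finite thanks to the hypothesis $q>3$. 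The resulting bootstrap inequality takes the schematic form
$$M(t)\le C\|\w_0\|_{\LL^3(\R^3)}+C_1 M(t)N(t)+C_2T^{\frac{1}{2}}\|\u_0\|_{\LL^q(\R^3)}N(t)+C_3T^{\frac{1}{2}}\|\nabla\u_0\|_{\LL^3(\R^3)}M(t)+C_4T\|\u_0\|_{\LL^q(\R^3)}\|\nabla\u_0\|_{\LL^3(\R^3)},$$
where I upper bound the $t$-dependent prefactors by $T^{1/2}$ or $T$ using $T>1$.

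The gradient estimate is entirely parallel: apply the spatial gradient to (\ref{Perturbation_Heat}), move it onto the kernel $K_{t-s}$, and invoke (\ref{Inv_Grad_Lq-Lp}) with $p$ as above and target exponent $3$, which gives the decay $(t-s)^{-\frac{1}{2}-\frac{3}{2q}}$. The condition $q>3$ is precisely what makes the exponent $\frac{1}{2}-\frac{3}{2q}$ positive and thus keeps the corresponding Beta integrals finite. This yields an analogous inequality for $N(t)$. Adding the two and invoking (\ref{sec3-lm1-smallness_1})--(\ref{sec3-lm1-smallness_2}), I fix $K>0$ small (depending only on the Beta constants) so that every coefficient multiplying $M(t)$, $N(t)$ or $M(t)N(t)$ stays strictly below $\tfrac{1}{2}$ while the inhomogeneous part is below $\tfrac{K}{2}$. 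A standard continuity argument starting from $M(0)=N(0)=0$ then upgrades the self-improving inequality to $M(t)+N(t)\le K$ on $[0,T]$, which is exactly (\ref{sec3-lm1_Lq})--(\ref{sec3-lm1_Grad_L3}).

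The main obstacle will be bookkeeping the powers of $T$: the term $\mathcal{P}(\u\cdot\nabla\u)$ is genuinely inhomogeneous (it contains no $\w$), so its contribution must be absorbed from the smallness of $\u_0$ alone, which forces the factor $T^{\frac{1}{2}}$ in (\ref{sec3-lm1-smallness_1}) rather than the weaker critical factor $T^{\frac{1}{2}(1-3/q)}$ one might have hoped for. A second delicate point is that the very specific time weights in the definitions of $M$ and $N$ must match the scaling of each Beta integral exactly; any mismatch would yield a prefactor $t^{\alpha}$ with $\alpha<0$ and destroy the bootstrap near $t=0$.
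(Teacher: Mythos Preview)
Your proposal is correct and follows essentially the same route as the paper: Duhamel representation, heat-kernel smoothing from Proposition~\ref{Pro1} with the choice $\tfrac{1}{p}=\tfrac{1}{q}+\tfrac{1}{3}$, reduction of the time integrals to Beta functions via $s=t\sigma$, and a bootstrap closed under the smallness hypotheses (\ref{sec3-lm1-smallness_1})--(\ref{sec3-lm1-smallness_2}). The only cosmetic difference is that the paper phrases the closure by solving the resulting quadratic in $K$ explicitly, whereas you frame it as a continuity argument on $M(t)+N(t)$; the underlying computation is identical.
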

\begin{proof} First of all, observe, from (\ref{Inv_Lq-Lp}) and (\ref{Inv_Grad_Lq-Lp}), that   
$$
\|K_t*\w_0\|_{\LL^q(\R^3)}\le C t^{-(1-\frac{3}{q})\frac{1}{2}}  \|\w_0\|_{\LL^3(\R^3)}
$$
and
$$
\|\nabla K_t*\w_0\|_{\LL^3(\R^3)}\le C t^{-\frac{1}{2}} \|\w_0\|_{\LL^3(\R^3)}. 
$$
Next, assume that (\ref{sec3-lm1_Lq}) and (\ref{sec3-lm1_Grad_L3}) hold. Then we will see that it requires that (\ref{sec3-lm1-smallness_1}) and (\ref{sec3-lm1-smallness_2}) are to be satisfied. Let us bound the right-hand side of (\ref{Perturbation}). We have, by (\ref{Inv_Lq-Lp}) and  (\ref{Interp_NL}), that  
$$
\begin{array}{rcl}
\displaystyle\|\int_{0}^t K_{t-s}*\mathcal{P}(\w\cdot\nabla\w) \ds\|_{\LL^q(\mathds{R}^3)}&\le&\displaystyle \int_{0}^t \|K_{t-s}*\mathcal{P}(\w\cdot\nabla\w) \|_{\LL^q(\mathds{R}^3)} \ds
\\
&\le &\displaystyle C \int_{0}^t (t-s)^{-(\frac{1}{p}-\frac{1}{q})\frac{3}{2}} \|\mathcal{P}(\w\cdot\nabla\w) \|_{\LL^p(\mathds{R}^3)} \ds
\\
&\le &\displaystyle C \int_{0}^t (t-s)^{-(\frac{1}{p}-\frac{1}{q})\frac{3}{2}} \|\w\|_{\LL^q(\mathds{R}^3)}\|\nabla\w\|_{\LL^3(\mathds{R}^3)} \ds
\\
&\le &\displaystyle C K^2 \int_{0}^t (t-s)^{-(\frac{1}{p}-\frac{1}{q})\frac{3}{2}} s^{ -\frac{1}{2}(1-\frac{3}{q})} s^{-\frac{1}{2}}  \ds
\\
&\le& C K^2 t^{ -\frac{1}{2}(1-\frac{3}{q})} \beta(\frac{3}{2 q}, \frac{1}{2})\le C K^2 t^{ -\frac{1}{2}(1-\frac{3}{q})},
\end{array}
$$
where we have utilized the change of variable $s=t\gamma$ to obtain $\beta(\frac{3}{2 q}, \frac{1}{2})$. Analogously, we obtain, from $\|\u(t)\|_{\LL^q(\R^3)}\le  \|\u_0\|_{\LL^q(\R^3)}$ and  $\|\nabla\u(t)\|_{\LL^3(\R^3)}\le  \|\nabla\u_0\|_{\LL^3(\R^3)}$ :
$$
\begin{array}{rcl}
\displaystyle 
\int_{0}^t \|K_{t-s}*\mathcal{P}(\u\cdot\nabla\w)\|_{\LL^q(\mathds{R}^3)} \ds&\le&\displaystyle \int_{0}^t \|K_{t-s}*\mathcal{P}(\u\cdot\nabla\w) \|_{\LL^q(\mathds{R}^3)} \ds
\\
&\le &\displaystyle C \int_{0}^t (t-s)^{-(\frac{1}{p}-\frac{1}{q})\frac{3}{2}} \|\mathcal{P}(\u\cdot\nabla\w) \|_{\LL^p(\mathds{R}^3)} \ds
\\
&\le &\displaystyle C \int_{0}^t (t-s)^{-(\frac{1}{p}-\frac{1}{q})\frac{3}{2}} \|\u\|_{\LL^q(\mathds{R}^3)}\|\nabla\w\|_{\LL^3(\mathds{R}^3)} \ds
\\
&\le &\displaystyle C K\|\u_0\|_{\LL^q(\R^3)} \int_{0}^t (t-s)^{-(\frac{1}{p}-\frac{1}{q})\frac{3}{2}} s^{-\frac{1}{2}}  \ds
\\
&\le& C K \|\u_0\|_{\LL^q(\R^3)} \beta(\frac{1}{2}, \frac{1}{2})
\\
&\le& C K \|\u_0\|_{\LL^q(\R^3)} T^{\frac{1}{2}(1-\frac{3}{q})}  t^{ -\frac{1}{2}(1-\frac{3}{q})},
\end{array}
$$
$$
\begin{array}{rcl}
\displaystyle\|\int_{0}^t K_{t-s}*\mathcal{P}(\w\cdot\nabla\u) \ds\|_{\LL^q(\mathds{R}^3)}&\le&\displaystyle \int_{0}^t \|K_{t-s}*\mathcal{P}(\w\cdot\nabla\u) \|_{\LL^q(\mathds{R}^3)} \ds
\\
&\le &\displaystyle C \int_{0}^t (t-s)^{-(\frac{1}{p}-\frac{1}{q})\frac{3}{2}} \|\mathcal{P}(\w\cdot\nabla\u) \|_{\LL^p(\mathds{R}^3)} \ds
\\
&\le &\displaystyle C \int_{0}^t (t-s)^{-(\frac{1}{p}-\frac{1}{q})\frac{3}{2}} \|\w\|_{\LL^q(\mathds{R}^3)}\|\nabla\u\|_{\LL^3(\mathds{R}^3)} \ds
\\
&\le &\displaystyle C K \|\nabla\u_0\|_{\LL^3(\R^3)} \int_{0}^t (t-s)^{-(\frac{1}{p}-\frac{1}{q})\frac{3}{2}} s^{-\frac{1}{2}(1-\frac{3}{q})}  \ds
\\
&\le&C K \|\nabla\u_0\|_{\LL^3(\R^3)} t^{\frac{1}{2}} t^{ -\frac{1}{2}(1-\frac{3}{q})}  \beta(\frac{1}{2}(1+\frac{3}{q}), \frac{1}{2})
\\
&\le&C K \|\nabla\u_0\|_{\LL^3(\R^3)} T^{\frac{1}{2}} t^{ -\frac{1}{2}(1-\frac{3}{q})}
\end{array}
$$
and 
$$
\begin{array}{rcl}
\displaystyle
\int_{0}^t \|K_{t-s}*\mathcal{P}(\u\cdot\nabla\u)\|_{\LL^q(\mathds{R}^3)} \ds&\le&\displaystyle C \|\u_0\|_{\LL^q(\mathds{R}^3)} \|\nabla\u_0\|_{\LL^3(\R^3)}  t^{\frac{1}{2}}\beta(1, \frac{1}{2})
\\
&\le&\displaystyle C \|\u_0\|_{\LL^q(\mathds{R}^3)} \|\nabla\u_0\|_{\LL^3(\R^3)} T^{1-\frac{3}{2q}}  t^{ -\frac{1}{2}(1-\frac{3}{q})}.
\end{array}
$$
Applying the above estimates to (\ref{Perturbation_Heat}),  we obtain
$$
\begin{array}{rcl}
t^{\frac{1}{2}(1-\frac{3}{q})} \|\w(t)\|_{\LL^q(\R^3)}&\le& C \|\w_0\|_{\LL^3(\R^3)}+ C K^2 +C T^{\frac{1}{2}(1-\frac{3}{q})} \|\u_0\|_{\LL^q(\R^3)}K
\\
&&+ C T^{\frac{1}{2}}  \|\nabla\u_0\|_{\LL^3(\R^3)} K + C T^{1-\frac{3}{2q}}  \|\u_0\|_{\LL^q(\R^3)} \|\nabla\u_0\|_{\LL^3(\R^3)}.
\end{array}
$$
Since $T>1$ and $q>3$, we also have:
$$
\begin{array}{rcl}
t^{\frac{1}{2}(1-\frac{3}{q})} \|\w(t)\|_{\LL^q(\R^3)}&\le& C \|\w_0\|_{\LL^3(\R^3)}+ C K^2 +C T^{\frac{1}{2}} \|\u_0\|_{\LL^q(\R^3)}K
\\
&&+ C T^{\frac{1}{2}}  \|\nabla\u_0\|_{\LL^3(\R^3)} K + C T  \|\u_0\|_{\LL^q(\R^3)} \|\nabla\u_0\|_{\LL^3(\R^3)}
\\
&\le& C \|\w_0\|_{\LL^3(\R^3)}+ C K^2 + 2 C T^{\frac{1}{2}} \max\{ \|\u_0\|_{\LL^q(\R^3)}, \|\nabla\u_0\|_{\LL^3(\R^3)} \}
 K
\\
&&+ C T  \max\{ \|\u_0\|_{\LL^q(\R^3)}, \|\nabla\u_0\|_{\LL^3(\R^3)}\}^2.
\end{array}
$$

Moreover, we have, by (\ref{Inv_Grad_Lq-Lp}) and (\ref{Interp_NL}), that  
$$
\begin{array}{rcl}
\displaystyle\int_{0}^t \|\nabla K_{t-s}*\mathcal{P}(\w\cdot\nabla\w)\|_{\LL^3(\mathds{R}^3)} \ds&\le&\displaystyle \int_{0}^t \|\nabla K_{t-s}*\mathcal{P}(\w\cdot\nabla\w) \|_{\LL^3(\mathds{R}^3)} \ds
\\
&\le &\displaystyle C \int_{0}^t (t-s)^{-\frac{3}{2p}} \|\mathcal{P}(\w\cdot\nabla\w) \|_{\LL^p(\mathds{R}^3)} \ds
\\
&\le &\displaystyle C \int_{0}^t (t-s)^{-\frac{3}{2p}} \|\w\|_{\LL^q(\mathds{R}^3)}\|\nabla\w\|_{\LL^3(\mathds{R}^3)} \ds
\\
&\le &\displaystyle C K^2 \int_{0}^t (t-s)^{-\frac{3}{2p}} s^{-\frac{1}{2}(1-\frac{3}{q})} s^{-\frac{1}{2}}  \ds
\\
&\le& C K^2 t^{ -\frac{1}{2}}  \beta(\frac{1}{2}(1-\frac{3}{q}), \frac{3}{2q}) \le C K^2 t^{ -\frac{1}{2}}.
\end{array}
$$
Analogously, 
$$
\begin{array}{rcl}
\displaystyle
\int_{0}^t \|\nabla K_{t-s}*\mathcal{P}(\u\cdot\nabla\w)\|_{\LL^3(\mathds{R}^3)}\ds &\le&\displaystyle \int_{0}^t \|\nabla K_{t-s}*\mathcal{P}(\u\cdot\nabla\w) \|_{\LL^3(\mathds{R}^3)} \ds
\\
&\le &\displaystyle C \int_{0}^t (t-s)^{-\frac{3}{2p}} \|\mathcal{P}(\u\cdot\nabla\w) \|_{\LL^p(\mathds{R}^3)} \ds
\\
&\le &\displaystyle C \int_{0}^t (t-s)^{-\frac{3}{2p}} \|\u\|_{\LL^q(\mathds{R}^3)}\|\nabla\w\|_{\LL^3(\mathds{R}^3)} \ds
\\
&\le &\displaystyle C K\|\u_0\|_{\LL^q(\R^3)} \int_{0}^t (t-s)^{-\frac{3}{2p}}  s^{-\frac{1}{2}}  \ds
\\
&\le& C K \|\u_0\|_{\LL^q(\R^3)}  t^{ \frac{1}{2}(1-\frac{3}{p})}  \beta(\frac{1}{2},1-\frac{3}{2p} ) 
\\
&\le& C K \|\u_0\|_{\LL^q(\R^3)} T^{1-\frac{3}{2p}} t^{ -\frac{1}{2}},
\end{array}
$$
$$
\begin{array}{rcl}
\displaystyle
\int_{0}^t \|\nabla K_{t-s}*\mathcal{P}(\w\cdot\nabla\u) \|_{\LL^3(\mathds{R}^3)} \ds&\le&\displaystyle \int_{0}^t \|\nabla K_{t-s}*\mathcal{P}(\w\cdot\nabla\u) \|_{\LL^3(\mathds{R}^3)} \ds
\\
&\le &\displaystyle C \int_{0}^t (t-s)^{-\frac{3}{2p}} \|\mathcal{P}(\w\cdot\nabla\u) \|_{\LL^p(\mathds{R}^3)} \ds
\\
&\le &\displaystyle C \int_{0}^t (t-s)^{-\frac{3}{2p}} \|\w\|_{\LL^q(\mathds{R}^3)}\|\nabla\u\|_{\LL^3(\mathds{R}^3)} \ds
\\
&\le &\displaystyle C K\|\nabla\u_0\|_{\LL^3(\R^3)} \int_{0}^t (t-s)^{-\frac{3}{2p}}  s^{-\frac{1}{2}(1-\frac{3}{q})} \ds
\\
&\le& C K \|\nabla\u_0\|_{\LL^3(\R^3)} \beta(\frac{1}{2}+\frac{3}{2p}, 1-\frac{3}{2p}) 
\\
&\le& C K \|\nabla\u_0\|_{\LL^3(\R^3)} T^{\frac{1}{2}}  t^{ -\frac{1}{2}}
\end{array}
$$
and 
$$
\int_{0}^t \|\nabla K_{t-s}*\mathcal{P}(\u\cdot\nabla\u)\|_{\LL^3(\mathds{R}^3)} \ds\le C \|\u_0\|_{\LL^q(\R^3)} \|\nabla\u_0\|_{\LL^3(\R^3)}  t^{-\frac{1}{2}} T^{\frac{3}{2}(1-\frac{1}{p})} .
$$
Applying the above estimates to (\ref{Perturbation_Heat}), we obtain
$$
\begin{array}{rcl}
t^{\frac{1}{2}}\|\nabla\w(t)\|_{\LL^3(\R^3)}&\le& C \|\w_0\|_{\LL^3(\R^3)}+ C K^2 +C T^{1-\frac{3}{2p}} \|\u_0\|_{\LL^q(\R^3)}K
\\
&&+ C T^\frac{1}{2}  \|\nabla\u_0\|_{\LL^3(\R^3)} K + C T^{\frac{3}{2}(1-\frac{1}{p})}   \|\u_0\|_{\LL^q(\R^3)} \|\nabla\u_0\|_{\LL^3(\R^3)}.
\end{array}
$$
From the relation  $\frac{1}{p}=\frac{1}{q}+\frac{1}{3}$, we write
$$
\begin{array}{rcl}
t^{\frac{1}{2}}\|\nabla\w(t)\|_{\LL^3(\R^3)}&\le& C \|\w_0\|_{\LL^3(\R^3)}+ C K^2 +C T^{\frac{1}{2}(1-\frac{3}{q})} \|\u_0\|_{\LL^q(\R^3)}K
\\
&&+ C T^\frac{1}{2}  \|\nabla\u_0\|_{\LL^3(\R^3)} K + C T^{1-\frac{3}{2q}}   \|\u_0\|_{\LL^q(\R^3)} \|\nabla\u_0\|_{\LL^3(\R^3)}
\end{array}
$$
and hence 
$$
\begin{array}{rcl}
t^{\frac{1}{2}}\|\nabla\w(t)\|_{\LL^3(\R^3)}&\le& C \|\w_0\|_{\LL^3(\R^3)}+ C K^2 +C T^{\frac{1}{2}}\|\u_0\|_{\LL^q(\R^3)}K
\\
&&+ C T^\frac{1}{2}  \|\nabla\u_0\|_{\LL^3(\R^3)} K + C T  \|\u_0\|_{\LL^q(\R^3)} \|\nabla\u_0\|_{\LL^3(\R^3)}.
\\
&\le& C \|\w_0\|_{\LL^3(\R^3)}+ C K^2 + 2 C T^{\frac{1}{2}} \max\{ \|\u_0\|_{\LL^q(\R^3)}, \|\nabla\u_0\|_{\LL^3(\R^3)} \}
 K
\\
&&+ C T  \max\{ \|\u_0\|_{\LL^q(\R^3)}, \|\nabla\u_0\|_{\LL^3(\R^3)}\}^2.
\end{array}
$$


To close the bootstrap argument, we need to find  $K>0$ such that 
$$
\begin{array}{rcl}
K&=& C \|\w_0\|_{\LL^3(\R^3)}+ C K^2 + 2 C T^{\frac{1}{2}} \max\{ \|\u_0\|_{\LL^q(\R^3)}, \|\nabla\u_0\|_{\LL^3(\R^3)} \}
 K
\\
&&+ C T  \max\{ \|\u_0\|_{\LL^q(\R^3)}, \|\nabla\u_0\|_{\LL^3(\R^3)}\}^2.

\end{array}
$$
or, equivalently, 
$$
\begin{array}{rcl}
0&= &C \|\w_0\|_{\LL^3(\R^3)}+ C K^2+ (2CT^{\frac{1}{2}} \max\{\|\u_0\|_{\LL^q(\R^3)},\|\nabla\u_0\|_{\LL^3(\R^3)}\}
-1) K 
\\
&&+ C T  \max\{\|\u_0\|_{\LL^q(\R^3)}, \|\nabla\u_0\|_{\LL^3(\R^3)}\}^2.
\end{array}
$$
Let us choose $ 2CT^{\frac{1}{2}} \max\{\|\u_0\|_{\LL^q(\R^3)}, \|\nabla\u_0\|_{\LL^3(\R^3)}\}
-1<-\frac{1}{2}$.  As a result, we obtain that $K$ satisfies 
$$
0<K\le \frac{\frac{1}{2}-\sqrt{\frac{1}{4}-4C^2 (\|\w_0\|_{\LL^3(\mathds{R}^3)}+ T  
\max\{\|\u_0\|_{\LL^q(\R^3)}, \|\nabla\u_0\|_{\LL^3(\R^3)}\}^2} )}{2C}.
$$
Let us choose  $\frac{1}{4}-4C^2 (\|\w_0\|_{\LL^3(\mathds{R}^3)}+ T  \max\{\|\u_0\|_{\LL^q(\R^3)}, \|\nabla\u_0\|_{\LL^3(\R^3)}\}^2)>0$. From conditions (\ref{sec3-lm1-smallness_1}) and  (\ref{sec3-lm1-smallness_2})  for $\max\{\|\u_0\|_{\LL^q(\R^3)}, \|\nabla\u_0\|_{\LL^3(\R^3)}\}$ and $\|\w_0\|_{\LL^3(\R^3)}$, the two above conditions hold. Thus, (\ref{sec3-lm1_Lq}) and (\ref{sec3-lm1_Grad_L3}) are also satisfied. It completes the proof.   
\end{proof}

As a consequence of Lemma \ref{sec3-lm1}, we will infer an estimate uniform in time for the $\|\cdot\|_{\LL^3(\R^3)}$-norm of $\w$ on [0,T].   

\begin{lemma}\label{sec3-lm2} Let $\u_0\in \boldsymbol{\mathcal S}(\R^3)$ and $\w_0\in \boldsymbol{\mathcal S}(\mathds{R}^3)$ be two divergence-free vector fields  satisfying (\ref{sec3-lm1-smallness_1}) and (\ref{sec3-lm1-smallness_2}). Then there exists a number $M_\w>0$ such that the solution $\w(t)$ to (\ref{Perturbation}) satisfies
$$
\sup_{t\in[0, T]}\|\w(t)\|_{\LL^3(\mathds{R}^3)}\le M_\w.
$$
\end{lemma}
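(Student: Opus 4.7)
The plan is to estimate $\|\w(t)\|_{\LL^3(\R^3)}$ directly from the Duhamel representation (\ref{Perturbation_Heat}), plugging in the pointwise-in-time bounds already furnished by Lemma \ref{sec3-lm1}. The new feature compared with Lemma \ref{sec3-lm1} is that the target space is the critical space $\LL^3(\R^3)$ rather than $\LL^q(\R^3)$ or $\dot W^{1,3}(\R^3)$, so the relevant exponent of the heat kernel is $-\frac{3}{2q}$ instead of $-\frac{1}{2}(1-\frac{3}{q})$ or $-\frac{3}{2p}$; this exponent turns out to be precisely what is needed to make every time-integral scale-free in $t$.

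First I would bound the free term by the contraction property of the heat semigroup on $\LL^p$:
$$
\|K_t*\w_0\|_{\LL^3(\R^3)}\le \|\w_0\|_{\LL^3(\R^3)}.
$$
Then, recalling $\frac{1}{p}=\frac{1}{q}+\frac{1}{3}$, I would apply (\ref{Inv_Lq-Lp}) with exponents $(p,3)$ (so the kernel cost is $(t-s)^{-3/(2q)}$) together with (\ref{Interp_NL}), and substitute in the bounds
$$
\|\w(s)\|_{\LL^q(\R^3)}\le K\, s^{-\frac{1}{2}(1-\frac{3}{q})},\qquad \|\nabla\w(s)\|_{\LL^3(\R^3)}\le K\, s^{-\frac{1}{2}},
$$
together with the trivial $\|\u(s)\|_{\LL^q(\R^3)}\le\|\u_0\|_{\LL^q(\R^3)}$ and $\|\nabla\u(s)\|_{\LL^3(\R^3)}\le\|\nabla\u_0\|_{\LL^3(\R^3)}$ coming from (\ref{Stokes_Heat}). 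Each of the four nonlinear terms then produces a beta-function integral: for $\w\cdot\nabla\w$ the exponents $(1-\tfrac{3}{2q},\tfrac{3}{2q})$ make the $t$-factor equal to $t^{0}=1$; for $\u\cdot\nabla\w$ one gets $t^{\frac{1}{2}-\frac{3}{2q}}\le T^{\frac{1}{2}-\frac{3}{2q}}$; for $\w\cdot\nabla\u$ one gets $t^{\frac{1}{2}}\le T^{\frac{1}{2}}$; and for $\u\cdot\nabla\u$ one gets $t^{1-\frac{3}{2q}}\le T^{1-\frac{3}{2q}}$. Since $q>3$, every beta-function argument lies in $(0,1)$ and is therefore finite.

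Assembling these estimates yields
$$
\|\w(t)\|_{\LL^3(\R^3)}\le C\bigl(\|\w_0\|_{\LL^3(\R^3)}+K^2+T^{\frac{1}{2}}\max\{\|\u_0\|_{\LL^q(\R^3)},\|\nabla\u_0\|_{\LL^3(\R^3)}\}K+T\max\{\|\u_0\|_{\LL^q(\R^3)},\|\nabla\u_0\|_{\LL^3(\R^3)}\}^2\bigr),
$$
valid for all $t\in[0,T]$, which defines the desired constant $M_\w$.

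The only non-routine point is to verify that, in the $\LL^3$-estimate, every time integral really is uniform in $t$: the potentially dangerous term is $\w\cdot\nabla\w$, where the product $s^{-\frac{1}{2}(1-\frac{3}{q})}s^{-\frac{1}{2}}=s^{-1+\frac{3}{2q}}$ has a borderline integrability at $s=0$. It is here that the choice $q>3$ is used in an essential way, since it is precisely this condition that places $\frac{3}{2q}$ in $(0,\frac{1}{2})$ and thus makes $\beta(1-\frac{3}{2q},\frac{3}{2q})$ finite; the matching $t$-exponent vanishes, which is what delivers a constant (as opposed to merely a blowing-up) bound. The remaining terms are easier because each contributes a strictly positive power of $T$ but no negative power of $t$. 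Once this is observed, the lemma follows at once from the smallness conditions (\ref{sec3-lm1-smallness_1})--(\ref{sec3-lm1-smallness_2}) that were already imposed.
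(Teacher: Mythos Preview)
Your proof is correct and follows essentially the same approach as the paper: apply the Duhamel formula, use the heat-kernel estimate (\ref{Inv_Lq-Lp}) with target $\LL^3$ and source $\LL^p$ (kernel cost $(t-s)^{-3/(2q)}$), and plug in the bounds from Lemma~\ref{sec3-lm1}. The only difference is in how the $\u$-factors are handled: you keep the contraction bounds $\|\u(s)\|_{\LL^q}\le\|\u_0\|_{\LL^q}$ and $\|\nabla\u(s)\|_{\LL^3}\le\|\nabla\u_0\|_{\LL^3}$ (as in Lemma~\ref{sec3-lm1}), which leaves positive powers of $T$ in $M_\w$, whereas the paper uses the smoothing estimates $\|\u(s)\|_{\LL^q}\le Cs^{-\frac12(1-\frac3q)}\|\u_0\|_{\LL^3}$ and $\|\nabla\u(s)\|_{\LL^3}\le Cs^{-\frac12}\|\u_0\|_{\LL^3}$, so that every beta-integral becomes scale-free and one obtains the $T$-independent constant $M_\w=C\|\w_0\|_{\LL^3}+CK^2+2CK\|\u_0\|_{\LL^3}+C\|\u_0\|_{\LL^3}^2$.
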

\begin{proof} From (\ref{Perturbation_Heat}), we have 
$$
\|\w(t)\|_{\LL^3(\mathds{R}^3)}\le \|K_t*\w_0\|_{\LL^3(\R^3)}+\int_0^t \|K_{t-s} * (\mathcal{P}(\w\cdot\nabla\w)+\mathcal{P}(\u\cdot\nabla\w)+\mathcal{P}(\w\cdot\nabla\w)+\mathcal{P}(\u\cdot\nabla\u))\|_{\LL^3(\mathds{R}^3)}  \ds.
$$
Let us now bound each term on the right-hand side. We have, by (\ref{Inv_Lq-Lp}), (\ref{sec3-lm1-smallness_1}), and (\ref{sec3-lm1-smallness_2}), that  
$$
\|K_t*\w_0\|_{\LL^3(\R^3)}\le C \|\w_0\|_{\LL^3(\R^3)}
$$
$$
\begin{array}{rcl}
\displaystyle
\int_0^t\|K_{t-s} * \mathcal{P}(\w\cdot\nabla\w)\|_{\LL^3(\mathds{R}^3)}\ds&\le&\displaystyle C\int_0^t (t-s)^{-(\frac{3}{p}-1)\frac{1}{2}} \|\w\cdot\nabla\w\|_{\LL^p(\mathds{R}^3)}\ds
\\
&\le& C\displaystyle \int_0^t (t-s)^{-\frac{3}{2q}} \|\w\|_{\LL^q(\mathds{R}^3)} \|\nabla\w\|_{\LL^{3}(\mathds{R}^3)} \ds
\\
&\le & C K^2\displaystyle \int_0^t (t-s)^{-\frac{3}{2q}} s^{-(1-\frac{3}{q})\frac{1}{2}}  s^{-\frac{1}{2}} \ds
\\
&\le& C K^2\beta(\frac{1}{2}(2-\frac{3}{q}), \frac{3}{q})\le C K^2,
\end{array}
$$
$$
\begin{array}{rcl}
\displaystyle
\int_0^t\|K_{t-s} * \mathcal{P}(\u\cdot\nabla\w)\|_{\LL^3(\mathds{R}^3)}\ds&\le&\displaystyle C\int_0^t (t-s)^{-(\frac{3}{p}-1)\frac{1}{2}} \|\u\cdot\nabla\w\|_{\LL^p(\mathds{R}^3)}\ds
\\
&\le& C\displaystyle \int_0^t (t-s)^{-\frac{3}{2q}} \|\u\|_{\LL^q(\mathds{R}^3)} \|\nabla\w\|_{\LL^{3}(\mathds{R}^3)} \ds
\\
&\le & C K\displaystyle \int_0^t (t-s)^{-\frac{3}{2q}} s^{-(1-\frac{3}{q})\frac{1}{2}} \|\u\|_{\LL^3(\R^3)} s^{-\frac{1}{2}} \ds
\\
&\le& C K  \|\u_0\|_{\LL^3(\mathds{R}^3)}  \beta(\frac{1}{2}(2-\frac{3}{q}), \frac{3}{q})\le C K \|\u_0\|_{\LL^3(\mathds{R}^3)} ,
\end{array}
$$
$$
\begin{array}{rcl}
\displaystyle
\int_0^t\|K_{t-s} * \mathcal{P}(\w\cdot\nabla\u)\|_{\LL^3(\mathds{R}^3)}\ds&\le&\displaystyle C\int_0^t (t-s)^{-(\frac{3}{p}-1)\frac{1}{2}} \|\w\cdot\nabla\u\|_{\LL^p(\mathds{R}^3)}\ds
\\
&\le& C\displaystyle \int_0^t (t-s)^{-\frac{3}{2q}} \|\w\|_{\LL^q(\mathds{R}^3)} \|\nabla\u\|_{\LL^{3}(\mathds{R}^3)} \ds
\\
&\le & C K\displaystyle \int_0^t (t-s)^{-\frac{3}{2q}} s^{-\frac{1}{2}(1-\frac{3}{q})} s^{-\frac{1}{2}} \|\u\|_{\LL^3(\mathds{R}^3)}  \ds
\\
&\le& C K \|\u_0\|_{\LL^3(\mathds{R}^3)} \beta(\frac{1}{2}(2-\frac{3}{q}), \frac{3}{q})\le  C K \|\u_0\|_{\LL^3(\mathds{R}^3)}
\end{array}
$$
and
$$
\begin{array}{rcl}
\displaystyle
\int_0^t\|K_{t-s} * \mathcal{P}(\u\cdot\nabla\u)\|_{\LL^3(\mathds{R}^3)}\ds&\le&\displaystyle C\int_0^t (t-s)^{-(\frac{3}{p}-1)\frac{1}{2}} \|\u\cdot\nabla\u\|_{\LL^p(\mathds{R}^3)}\ds
\\
&\le& C\displaystyle \int_0^t (t-s)^{-\frac{3}{2q}} \|\u\|_{\LL^q(\mathds{R}^3)} \|\nabla\u\|_{\LL^{3}(\mathds{R}^3)} \ds
\\
&\le & C \displaystyle \int_0^t (t-s)^{-\frac{3}{2q}} s^{-(\frac{1}{3}-\frac{1}{q})\frac{3}{2}} s^{-\frac{1}{2}}  \|\u\|_{\LL^3(\mathds{R}^3)}  \ds
\\
&\le& C\|\u_0\|^2_{\LL^3(\mathds{R}^3)}   \beta(\frac{1}{2}(2-\frac{3}{q}), \frac{3}{q})\le C\|\u_0\|^2_{\LL^3(\mathds{R}^3)}.
\end{array}
$$
Therefore, we obtain
\begin{equation}\label{sec3-lm2-lab1}
\begin{array}{rcl}
\|\w(t)\|_{\LL^3(\mathds{R}^3)}&\le& C \|\w_0\|_{\LL^3(\R^3)}+ C K^2+2 C \|\u_0\|_{\LL^3(\mathds{R}^3)} K +
C  \|\u_0\|_{\LL^3(\mathds{R}^3)}^2:=M_\w.
\end{array}
\end{equation}
\end{proof}
In view of Lemmas \ref{sec3-lm1} and \ref{sec3-lm2}, we have proved the existence of an $\LL^3(\R^3)$-solution to (\ref{NS}) on $[0,T]$ under certain smallness conditions for $\u_0$ and $\w_0$. 

\begin{lemma}\label{sec3-lm3}  Let $\u_0\in \S(\R^3)$ and $\w_0\in \S(\mathds{R}^3)$ be two divergence-free vector fields  satisfying (\ref{sec3-lm1-smallness_1}) and (\ref{sec3-lm1-smallness_2}), respectively. Then there exists $M_\vv>0$ such that the solution $\vv(t)$ to (\ref{NS}) with  $\vv_0=\u_0+\w_0$ satisfies
$$
\sup_{t\in [0,T]}\|\vv(t)\|_{\LL^3(\mathds{R}^3)}\le M_\vv,
$$
\end{lemma}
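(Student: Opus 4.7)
The plan is very short since all of the hard work has already been carried out in Lemmas~\ref{sec3-lm1} and~\ref{sec3-lm2}. Writing $\vv=\u+\w$, which is how the original system~(\ref{NS}) was split into~(\ref{Stokes}) and~(\ref{Perturbation}), the triangle inequality gives
$$
\|\vv(t)\|_{\LL^3(\R^3)}\le \|\u(t)\|_{\LL^3(\R^3)}+\|\w(t)\|_{\LL^3(\R^3)}
$$
for every $t\in[0,T]$, so it suffices to bound each summand uniformly in $t$.

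For the Stokes part, $\u(t)=K_t*\u_0$ by~(\ref{Stokes_Heat}). Estimate~(\ref{Inv_Lq-Lp}) with $p=q=3$ (or equivalently Young's inequality combined with $\|K_t\|_{\LL^1(\R^3)}=1$) yields $\|\u(t)\|_{\LL^3(\R^3)}\le C\|\u_0\|_{\LL^3(\R^3)}$ for all $t\ge 0$, a bound independent of $t$. For the perturbation part, Lemma~\ref{sec3-lm2} gives directly
$$
\sup_{t\in[0,T]}\|\w(t)\|_{\LL^3(\R^3)}\le M_\w,
$$
with $M_\w$ the explicit constant in~(\ref{sec3-lm2-lab1}), provided the smallness conditions~(\ref{sec3-lm1-smallness_1}) and~(\ref{sec3-lm1-smallness_2}) hold, which is the assumption of the lemma.

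Setting $M_\vv:=C\|\u_0\|_{\LL^3(\R^3)}+M_\w$ then finishes the proof. There is no genuine obstacle here: the lemma is simply recording that the decomposition $\vv_0=\u_0+\w_0$ transfers the uniform $\LL^3$-bound from $\w$ to $\vv$, which is precisely what will be needed to invoke the Escauriaza--Seregin--\v{S}ver\'ak criterion in the subsequent step of the proof of Theorem~\ref{Main}.
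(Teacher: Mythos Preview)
Your proof is correct and follows essentially the same approach as the paper: decompose $\vv=\u+\w$, bound $\|\u(t)\|_{\LL^3(\R^3)}$ via the heat-kernel representation~(\ref{Stokes_Heat}), bound $\|\w(t)\|_{\LL^3(\R^3)}$ by Lemma~\ref{sec3-lm2}, and add. The only cosmetic difference is that the paper uses $\|K_t\|_{\LL^1(\R^3)}=1$ to get $\|\u(t)\|_{\LL^3(\R^3)}\le\|\u_0\|_{\LL^3(\R^3)}$ without an extra constant, whereas you carry a harmless factor $C$.
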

\begin{proof} First notice that, from (\ref{Stokes_Heat}), we have $\|\u(t)\|_{\LL^3(\mathds{R}^3)}\le \|\u_0\|_{\LL^3(\mathds{R}^3)}:=M_\u$ for $t\in[0,T]$. By Lemma \ref{sec3-lm2}, we have $\|\w(t)\|_{\LL^3(\mathds{R}^3)}\le M_\w$ for all $t\in[0, T]$. Therefore, if we define $\vv(t)=\u(t)+\w(t)$, we obtain $\|\vv(t)\|_{\LL^3(\mathds{R}^3)}\le M_\u+M_\w:=M_\vv$ for all $t\in[0,T]$, where $\vv$ satisfies (\ref{NS}) on $[0,T]$ with $\vv_0=\u_0+\w_0$.  
\end{proof} 

\begin{remark} It is not hard to see that the above estimate obtained for the regularized solutions are independent of $\varepsilon$ and hence are also true for the solutions of the unregularized Navier-Stokes equations. From now on, we are allowed to work  without any regularization procedure.      
\end{remark}

Our next goal is to provide a family of smooth initial data $\vv_0$ which can be split into $\u_0$ and $\w_0$ satisfying  (\ref{sec3-lm1-smallness_1}) and (\ref{sec3-lm1-smallness_2}), respectively. In doing so, we take advantage of the ``scissors effect " of the scaling property of the Navier-Stokes solutions. That is, we will use different scalings for $\u_0$ and $\w_0$ so that  supercritical and subcritical norms increase and decrease oppositely with the $\LL^3(\R^3)$-norm being invariant. This way we avoid that the size of any norm of $\vv_0$ is no longer small but large.          

Let $\boldsymbol{\vartheta}_0\in\S(\R^3)$ be a divergence-free vector field. We are allowed to take $\varepsilon >0$ such that $\boldsymbol{\vartheta}_0=(1-\varepsilon) \boldsymbol{\vartheta}_0+ \varepsilon\boldsymbol{\vartheta}_0:=\u_{0,\varepsilon}+\w_{0,\varepsilon}$ so that $\w_{0,\varepsilon}$ satisfies condition (\ref{sec3-lm1-smallness_2}). Next we define $\u_{0,\varepsilon}^{\tilde\lambda}=\tilde\lambda\u_{0,\varepsilon}(\tilde\lambda \x)$ and $\w_{0,\varepsilon}^{\hat\lambda}=\hat\lambda\w_{0,\varepsilon}(\hat\lambda \x)$ for $\tilde\lambda, \hat\lambda>0$. Letting $\tilde \lambda$ go to $0$, we find that there exists $\tilde\lambda_0$ such that, for all $\tilde\lambda\le \tilde\lambda_0$, it follows that condition (\ref{sec3-lm1-smallness_1}) holds for $\u_{0,\varepsilon}^{\tilde\lambda}$. Moreover, for any $\hat \lambda$, we find that $\w_{0,\varepsilon}^{\tilde\lambda}$ fulfills condition (\ref{sec3-lm1-smallness_2}) since the $\LL^3(\R^3)$-norm is scaling invariant. This last rescaling is not really necessary, but it allows us to construct initial data arbitrarily large under any supercritical norm. Thus we define $\vv_0=\u_{0,\varepsilon}^{\tilde\lambda}+\w_{0,\varepsilon}^{\hat\lambda}$ whose $\LL^3(\R^3)$-norm remains almost invariant due to our special choice, i.e. $\|\vv_0\|_{\LL^3(\R^3)}\le \|\u_0\|_{\LL^3(\R^3)}+\|\w_0\|_{\LL^3(\R^3)}\le (1-\varepsilon) \|\boldsymbol{\vartheta}_0\|_{\LL^3(\R^3)}+\varepsilon\|\boldsymbol{\vartheta}_0\|_{\LL^3(\R^3)}=\|\boldsymbol{\vartheta}_0\|_{\LL^3(\R^3)}$. Instead, supercritical norms can be arbitrarily large by doing $\hat\lambda$ to tend to $\infty$ and subcritical norms by doing $\tilde\lambda$ to tend to $0$.

Another possibility to construct smooth initial data $\vv_0$ is as follows. Consider $\tilde \u_0\in\S(\R^3)$ and $\tilde \w_0\in\S(\R^3)$ to be two divergence-free vector fields and define $\vv_0= \tilde\u^{\tilde\lambda}_0+\varepsilon\tilde\w_0^\lambda:=\u^{\hat\lambda}_0 +\w_{0,\varepsilon}$. Pick $\tilde\lambda$ to be such that $\u_0^{\tilde\lambda}$ satisfies condition (\ref{sec3-lm1-smallness_1}) and $\varepsilon$ to be such that $\w_{0,\varepsilon}$ satisfies condition (\ref{sec3-lm1-smallness_2}). 

The following theorem was proved in \cite{Escauriaza-Seregin-Sverak_2003} by Escauriaza, Seregin, and {\v{S}}ver{\'a}k. 
\begin{theorem}\label{ESS} Let $\vv_0\in\boldsymbol{\mathcal{S}}(\mathds{R}^3)$ be a divergence-free vector field.  Assume that $\vv(t) $ is a weak Leray-Hopf solution to (\ref{PDE-NS})--(\ref{IC}) and satisfies the additional condition 
$$
\sup_{t\in[0, T]}\|\vv(t)\|_{\LL^3(\mathds{R}^3)}<\infty. 
$$
Then $\vv(t)$ is a strong solution to (\ref{PDE-NS})--(\ref{IC}) on $[0,T]$. 
\end{theorem}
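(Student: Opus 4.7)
The plan is to argue by contradiction using a scaling/blow-up procedure combined with backward uniqueness and unique continuation for the vorticity equation. Suppose $\vv$ is a Leray-Hopf solution with $M := \sup_{t\in[0,T]}\|\vv(t)\|_{\LL^3(\R^3)}<\infty$ which fails to be strong on $[0,T]$. Then there is a first singular time $T^*\in(0,T]$, and by the partial regularity theory of Caffarelli--Kohn--Nirenberg, a singular spatial point $\x_0$ at time $T^*$. The aim is to show that the presence of such a point is incompatible with the critical bound $M<\infty$.

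First I would zoom in near $(\x_0,T^*)$ with the scaling-invariant rescaling
\[
\vv_k(\y,s) := \lambda_k\, \vv\bigl(\x_0+\lambda_k\y,\ T^*+\lambda_k^2 s\bigr),\qquad \lambda_k\to 0^+.
\]
Since the $\LL^3$ norm is scale-invariant, $\sup_s \|\vv_k(\cdot,s)\|_{\LL^3(\R^3)}\le M$ uniformly in $k$, and the Leray-Hopf energy inequality together with the local energy inequality provide enough parabolic regularity to extract a subsequential weak limit $\boldsymbol{U}$ on $\R^3\times(-\infty,0)$. This $\boldsymbol{U}$ is a suitable weak ``ancient solution'' satisfying $\sup_s\|\boldsymbol{U}(\cdot,s)\|_{\LL^3(\R^3)}\le M$. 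Using the $\epsilon$-regularity criterion on the pre-limit sequence, the persistence of the singular point at the origin of the rescaled window forces a uniform lower bound on the local energy, so $\boldsymbol{U}$ cannot be identically zero.

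The main obstacle is to rule out such a nontrivial ancient solution, and this is the technical heart of the argument. Passing to the vorticity $\boldsymbol{\omega}:=\nabla\times\boldsymbol{U}$, which solves
\[
\partial_s\boldsymbol{\omega}-\Delta\boldsymbol{\omega}+(\boldsymbol{U}\cdot\nabla)\boldsymbol{\omega}-(\boldsymbol{\omega}\cdot\nabla)\boldsymbol{U}=\boldsymbol{0},
\]
a spatial decay argument (using $\boldsymbol{U}\in L^\infty_s\LL^3_\y$ together with $\epsilon$-regularity away from the origin) shows that $\boldsymbol{U}$ and $\boldsymbol{\omega}$ are small on the complement of a sufficiently large ball $B_R$, so that $\boldsymbol{\omega}$ satisfies a differential inequality of the form $|\partial_s\boldsymbol{\omega}-\Delta\boldsymbol{\omega}|\le c(|\boldsymbol{\omega}|+|\nabla\boldsymbol{\omega}|)$ there with bounded coefficients. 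I would then invoke a backward uniqueness theorem for the heat operator with such lower order terms, proved via a carefully chosen Carleman weight, to conclude $\boldsymbol{\omega}\equiv \boldsymbol{0}$ on $(\R^3\setminus B_R)\times(-\infty,0)$.

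Finally, a spatial unique continuation result, again based on a Carleman estimate, upgrades the exterior vanishing to $\boldsymbol{\omega}\equiv\boldsymbol{0}$ on all of $\R^3\times(-\infty,0)$. Combined with $\nabla\cdot\boldsymbol{U}=0$, this forces $\boldsymbol{U}(\cdot,s)$ to be harmonic at every time; together with $\boldsymbol{U}(\cdot,s)\in\LL^3(\R^3)$, Liouville's theorem gives $\boldsymbol{U}\equiv\boldsymbol{0}$, contradicting the nontriviality produced by the blow-up construction. Hence no such singular time $T^*$ exists, and $\vv$ is strong on $[0,T]$. The genuinely hard part is the pair of Carleman estimates underlying the backward uniqueness and the spatial unique continuation, whose weight functions must be tuned to the parabolic scaling and accommodate the low regularity of the drift and stretching terms.
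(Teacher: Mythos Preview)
Your outline is a faithful sketch of the original argument of Escauriaza, Seregin and \v{S}ver\'ak: rescaling around a putative first singular point to produce a nontrivial ancient suitable weak solution with bounded $\LL^3$ norm, then killing this ancient solution via backward uniqueness and unique continuation for the vorticity equation based on parabolic Carleman estimates. The identification of the Carleman-weight constructions as the technical core is also accurate.

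However, the paper under review does not prove this theorem at all. It is quoted as a black-box result with the attribution ``The following theorem was proved in \cite{Escauriaza-Seregin-Sverak_2003} by Escauriaza, Seregin, and \v{S}ver\'ak'', and is then invoked to upgrade the $\LL^3$ bound obtained in Lemma~\ref{sec3-lm3} to smoothness. So there is no ``paper's own proof'' to compare against; your proposal is essentially a summary of the cited reference rather than a reconstruction of anything in the present paper. If your goal is to match the paper, the correct move is simply to cite \cite{Escauriaza-Seregin-Sverak_2003}. If your goal is to supply an independent proof, your sketch points in the right direction, but be aware that several steps you pass over quickly are genuinely delicate: the compactness step requires control of the pressure and the local energy inequality to ensure the limit is a \emph{suitable} weak solution and that the singularity survives the limit (this is where one typically needs a uniform lower bound on a scale-invariant local quantity, not merely nontriviality of the limit), and the decay at spatial infinity needed to apply backward uniqueness must be established carefully from $\epsilon$-regularity. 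None of this is wrong in your outline, but a full proof is substantially longer than your sketch suggests.
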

Therefore, Lemma \ref{sec3-lm3} and Theorem \ref{ESS} combined with Theorem \ref{Leray} give that the solutions $\vv(t)$ whose initial data $\vv_0$ can be decomposed as, for instance, $\vv_0=\u_{0,\varepsilon}^{\tilde\lambda}+\w_{0,\varepsilon}^{\hat\lambda}$ with $\u_{0,\varepsilon}^{\tilde\lambda}\in\boldsymbol{\mathcal{S}}(\R^3)$ and $\w_{0,\varepsilon}^{\hat\lambda}\in\boldsymbol{\mathcal{S}}(\R^3)$ being divergence-free vector fields fulfilling (\ref{sec3-lm1-smallness_1}) (for certain $\tilde\lambda$) and (\ref{sec3-lm1-smallness_2}) (for certain $\varepsilon$) are strong, and hence they are smooth on $[0,T]$. It proves Theorem \ref{Main}.

\begin{remark} It is easy to see that the solutions given in Theorem \ref{Main} satisfy the estimate:
$$
\|\vv(t)\|_{\LL^q(\R^3)}\le (K+ C \|\u_0\|_{\LL^3(\R^3)}) t^{-\frac{1}{2}(1-\frac{3}{q})}\quad\mbox{ for all } t\in[0,T]. 
$$
This implies that $\|\vv(T)\|_{\LL^q(\R^3)}$ can be as small as required provided that $T$ is large. As a result, we can extend our solution to $[0, T^*)$ for $T^*$ being possible large. See \cite[Thm 15.3]{Lamerie_2002}.
\end{remark}

\section{Proof of Theorem \ref{Main2}}
We first decompose (\ref{NS}) as follows. Let $\w_\varepsilon$ be the solution to the Navier-Stokes problem
\begin{equation}\label{NS_II}
\left\{
\begin{array}{rcl}
\partial_t\w_\varepsilon- \Delta\w_\varepsilon+{\mathcal P}((\rho_\varepsilon*\w_\varepsilon)\cdot\nabla\w_\varepsilon) &=&\boldsymbol{0}, 
\\
\w_\varepsilon(0)&=&\w_0,
\end{array}
\right.
\end{equation}
and let $\u_\varepsilon$ be the solution to the perturbation problem 
\begin{equation}\label{Perturbation_II}
\left\{
\begin{array}{rcl}
\displaystyle
\partial_t\u_\varepsilon - \Delta\u_\varepsilon+\mathcal{P}((\rho_\varepsilon*\w_\varepsilon)\cdot\nabla\u_\varepsilon)+\mathcal{P}((\rho_\varepsilon*\u_\varepsilon)\cdot\nabla\w_\varepsilon)
+\mathcal{P}((\rho_\varepsilon*\u_\varepsilon)\cdot\nabla\u_\varepsilon)&=&\boldsymbol{0},
\\
\u_\varepsilon(0)&=&\u_0.
\end{array}
\right.
\end{equation}
As before, we drop the subscript $\varepsilon$ and the convolution operator from (\ref{NS_II}) and (\ref{Perturbation_II}).

The following result is a consequence of Lemmas \ref{sec3-lm1} and \ref{sec3-lm2}. In particular, we assume that  we have $C>1$ in Lemma \ref{sec3-lm1}.   
  
\begin{lemma}\label{sec4-lm1} Let $\w_0\in \S(\R^3)$ be a divergence-free vector field such that 
\begin{equation}\label{sec4-lm1-smallness_I}
\|\w_0\|_{\LL^3(\R^3)}\le\frac{1}{4C}.
\end{equation} 
Then there exists a smooth solution $\w(t)$ to (\ref{NS_II}) on $[0,\infty)$ such that 
\begin{equation}\label{sec4-lm1-L3-bound}
\sup_{t\in[0,\infty)} \|\w(t)\|_{\LL^3(\R^3)}< K:=\frac{1-\sqrt{1-4C^2 \|\w_0\|_{\LL^3(\mathds{R}^3)}}}{2C}. 
\end{equation}
\end{lemma}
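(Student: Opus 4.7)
The plan is to specialize Lemmas \ref{sec3-lm1} and \ref{sec3-lm2} to the degenerate choice $\u_0=\boldsymbol{0}$. With $\u\equiv\boldsymbol{0}$ the Stokes equation (\ref{Stokes}) is trivially satisfied, the perturbation equation (\ref{Perturbation}) collapses to the pure Navier-Stokes system (\ref{NS_II}) for $\w$, and the smallness condition (\ref{sec3-lm1-smallness_1}) on $\u_0$ is automatic for every $T>0$. A term-by-term inspection of the proof of Lemma \ref{sec3-lm1} shows that every contribution involving a positive power of $T$ also carries a factor $\|\u_0\|_{\LL^q(\R^3)}$ or $\|\nabla\u_0\|_{\LL^3(\R^3)}$, and therefore drops out. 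Verifying this bookkeeping is the one delicate step; once it is done, all bounds in Lemmas \ref{sec3-lm1} and \ref{sec3-lm2} become independent of $T$.

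After this reduction, the estimate (\ref{sec3-lm2-lab1}) collapses to the $T$-uniform inequality $\|\w(t)\|_{\LL^3(\R^3)}\le C\|\w_0\|_{\LL^3(\R^3)}+CK^2$, valid on every interval on which the bootstrap closes. Solving $CK^2-K+C\|\w_0\|_{\LL^3(\R^3)}\le 0$ and picking the smaller non-negative root gives precisely $K=(1-\sqrt{1-4C^2\|\w_0\|_{\LL^3(\R^3)}})/(2C)$, which matches the constant in the statement. The smallness hypothesis (\ref{sec4-lm1-smallness_I}), combined with the convention $C>1$, makes the discriminant strictly positive so $K$ is well-defined and satisfies $K<1/(2C)$; in particular the resulting $K$ is small enough to be compatible with the auxiliary smallness (\ref{sec3-lm1-smallness_2}) used inside the bootstrap of Lemma \ref{sec3-lm1}.

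Because the bound $\sup_{t\in[0,T]}\|\w(t)\|_{\LL^3(\R^3)}<K$ obtained in this way is uniform in $T$, it promotes to $\sup_{t\in[0,\infty)}\|\w(t)\|_{\LL^3(\R^3)}<K$. Smoothness of $\w$ on every $[0,T]$, and hence on $[0,\infty)$, then follows by combining Leray's existence theorem (Theorem \ref{Leray}) with the Escauriaza-Seregin-{\v S}ver{\'a}k criterion (Theorem \ref{ESS}), exactly as in the proof of Theorem \ref{Main}. No continuation argument across a finite blow-up time is required, since the $\LL^3$ bound is already global. The main conceptual obstacle is thus not the bootstrap itself but the careful verification that the constants produced by Lemmas \ref{sec3-lm1} and \ref{sec3-lm2} do indeed degenerate into $T$-free ones when $\u_0=\boldsymbol{0}$; every other ingredient is already in place in Section 3.
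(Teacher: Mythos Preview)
Your proposal is correct and follows essentially the same approach as the paper: set $\u_0=\boldsymbol{0}$ so that (\ref{Perturbation}) reduces to (\ref{NS_II}), rerun the bootstrap of Lemma~\ref{sec3-lm1} to obtain the $T$-free quadratic $0=C\|\w_0\|_{\LL^3(\R^3)}+CK^2-K$, and then invoke (\ref{sec3-lm2-lab1}) for the $\LL^3$ bound. Your additional remarks on the $T$-independence bookkeeping and the smoothness via Theorems~\ref{Leray} and~\ref{ESS} make explicit what the paper leaves implicit, but the argument is the same.
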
 
\begin{proof} From (\ref{Stokes}) for $\u_0=\boldsymbol{0}$ and (\ref{Perturbation}) , we recover (\ref{NS_II}). By following the proof of Lemma \ref{sec3-lm1}, we obtain that (\ref{sec3-lm1_Lq}) and (\ref{sec3-lm1_Grad_L3}) hold for $K$ such that
$$
0=C\|\w_0\|_{\LL^3(\R^3)}+ C K^2-K.
$$
Therefore,
$$
K=\frac{1-\sqrt{1-4C^2 \|\w_0\|_{\LL^3(\mathds{R}^3)}}}{2C}.
$$
In virtue of (\ref{sec3-lm2-lab1}), we obtain (\ref{sec4-lm1-L3-bound}).    
\end{proof}


\begin{lemma}\label{sec4-lm2}
Let $0<T<1$ be given. Let  $\u_0\in \S (\R^3) $ and $\w_0\in \S(\R^3)$ be two divergence-free vector fields such that 
\begin{equation}\label{sec4-lm2-smallness_I}
\max_{t\in[0,T]}\|\w(t)\|_{\LL^3(\R^3)}< \frac{1}{8 C} 
\end{equation}
and 
\begin{equation}\label{sec4-lm2-smallness_II}
 T^{-\frac{1}{4}} \|\u_0\|_{\LL^2(\R^3)}< \frac{1}{8 C }.
\end{equation} 
Then there exists $t^*\in(0,T]$ such that 
\begin{equation}\label{sec4-lm2-t-star-in-L3}
 \|\u(t^*)\|_{\LL^3(\mathds{R}^3)}< \frac{1}{8 C}.
\end{equation} 
\end{lemma}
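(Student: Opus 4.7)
The plan is to derive an $L^2$-in-time bound for $\|\u(t)\|_{\LL^3(\R^3)}$ using a standard energy estimate for $\u$, and then locate $t^*$ by an averaging argument. The smallness of $\w$ in $\LL^3$ will be exactly what is needed to absorb the only dangerous nonlinear term.

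First I would take the inner product of (\ref{Perturbation_II}) with $\u$, drop the subscript $\varepsilon$, and note that the projection $\mathcal{P}$ disappears when tested against the divergence-free field $\u$. The terms $\int(\w\cdot\nabla\u)\cdot\u\,\dx$ and $\int(\u\cdot\nabla\u)\cdot\u\,\dx$ vanish by integration by parts and $\nabla\cdot\w=\nabla\cdot\u=0$. The only surviving nonlinear contribution is $\int(\u\cdot\nabla\w)\cdot\u\,\dx$, which after one integration by parts equals $-\int(\u\cdot\nabla\u)\cdot\w\,\dx$. By H\"older and the Sobolev embedding $\H^1(\R^3)\hookrightarrow\LL^6(\R^3)$,
\[
\Bigl|\int_{\R^3}(\u\cdot\nabla\u)\cdot\w\,\dx\Bigr|\le\|\u\|_{\LL^6(\R^3)}\|\nabla\u\|_{\LL^2(\R^3)}\|\w\|_{\LL^3(\R^3)}\le C\,\|\w(t)\|_{\LL^3(\R^3)}\,\|\nabla\u(t)\|_{\LL^2(\R^3)}^2.
\]
Since (\ref{sec4-lm2-smallness_I}) gives $C\|\w(t)\|_{\LL^3(\R^3)}<1/8$ on $[0,T]$, this term can be absorbed into the dissipation, yielding
\[
\frac{d}{dt}\|\u(t)\|_{\LL^2(\R^3)}^2+\frac{7}{4}\|\nabla\u(t)\|_{\LL^2(\R^3)}^2\le 0,
\]
so that $\sup_{[0,T]}\|\u(t)\|_{\LL^2(\R^3)}\le\|\u_0\|_{\LL^2(\R^3)}$ and $\int_0^T\|\nabla\u(s)\|_{\LL^2(\R^3)}^2\,\ds\le\|\u_0\|_{\LL^2(\R^3)}^2$.

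Second I would pass to $\LL^3$ by the standard interpolation $\|\u\|_{\LL^3(\R^3)}\le C\|\u\|_{\LL^2(\R^3)}^{1/2}\|\nabla\u\|_{\LL^2(\R^3)}^{1/2}$. Combined with Cauchy--Schwarz in time and the previous energy bounds,
\[
\int_0^T\|\u(t)\|_{\LL^3(\R^3)}^2\,\dt\le C\,\|\u_0\|_{\LL^2(\R^3)}\int_0^T\|\nabla\u(t)\|_{\LL^2(\R^3)}\,\dt\le C\,T^{1/2}\|\u_0\|_{\LL^2(\R^3)}^2.
\]
The mean value theorem then produces some $t^*\in(0,T]$ with
\[
\|\u(t^*)\|_{\LL^3(\R^3)}^2\le\frac{1}{T}\int_0^T\|\u(t)\|_{\LL^3(\R^3)}^2\,\dt\le C\,T^{-1/2}\|\u_0\|_{\LL^2(\R^3)}^2,
\]
whence $\|\u(t^*)\|_{\LL^3(\R^3)}\le C\,T^{-1/4}\|\u_0\|_{\LL^2(\R^3)}$. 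Invoking (\ref{sec4-lm2-smallness_II}) (after, if needed, enlarging the running constant $C$ to absorb the Sobolev and interpolation constants collected along the way, in accordance with the convention set in Section 2) gives the claimed bound $\|\u(t^*)\|_{\LL^3(\R^3)}<1/(8C)$.

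The only delicate point in this strategy is controlling the coupling term $\int(\u\cdot\nabla\w)\cdot\u\,\dx$: it is the only term that is neither dissipative nor trivially antisymmetric, and it is cubic in an object mixing $\u$ and $\nabla\w$. The bound above avoids putting a derivative on $\w$ by integrating by parts, so that only $\|\w\|_{\LL^3(\R^3)}$ enters — and this is precisely the quantity made small by (\ref{sec4-lm2-smallness_I}). The rest is bookkeeping; the time-averaging step is the standard way to trade a spacetime integral bound for a pointwise bound at some good time $t^*$.
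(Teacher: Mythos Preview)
Your proof is correct and follows essentially the same route as the paper: an $L^2$ energy estimate on (\ref{Perturbation_II}) in which the only surviving nonlinear term is absorbed via (\ref{sec4-lm2-smallness_I}), followed by the interpolation $\|\u\|_{\LL^3}\le C\|\u\|_{\LL^2}^{1/2}\|\nabla\u\|_{\LL^2}^{1/2}$ and a time-averaging argument to locate $t^*$. The only cosmetic difference is that the paper integrates $\|\u(t)\|_{\LL^3}^4$ in time (avoiding the extra Cauchy--Schwarz step) whereas you integrate $\|\u(t)\|_{\LL^3}^2$; both yield the same bound $\|\u(t^*)\|_{\LL^3}\le C\,T^{-1/4}\|\u_0\|_{\LL^2}$.
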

\begin{proof}

Multiplying (\ref{Perturbation_II}) by $\u$ and integrating over $\R^3$ gives, after integration by parts,
$$
\begin{array}{rcl}
\displaystyle
\frac{1}{2}\frac{\rm d}{\dt}\|\u\|^2_{\LL^2(\R^3)}+\|\nabla\u\|^2_{\LL^2(\R^3)}&=&
\displaystyle
\int_{\R^3}\u\cdot\nabla\w\cdot\u\,\dx=-\int_{\R^3}\u\cdot\nabla\u\cdot\w\,\dx
\\
&\le&\|\u\|_{\LL^6(\R^3)} \|\nabla\u\|_{\LL^2(\R^3)} \|\w\|_{\LL^3(\R^3)}
\\
&\le& C \|\w\|_{\LL^3(\R^3)} \|\nabla\u\|^2_{\LL^2(\R^3)}. 
\end{array}
$$ 
From (\ref{sec4-lm2-smallness_I}), we arrive at 
$$
\frac{\rm d}{\dt}\|\u\|^2_{\LL^2(\R^3)}+\|\nabla\u\|^2_{\LL^2(\R^3)}\le 0. 
$$
Integrating with respect to time, we get
$$
\sup_{t\in[0,T]}\|\u(t)\|^2_{\LL^2(\R^3)}+\int_0^T\|\nabla\u(s)\|^2_{\LL^2(\R^3)}\ds\le \|\u_0\|^2_{\LL^2(\R^3)},
$$
whence
$$
\sup_{t\in[0,T]}\|\u(t)\|^2_{\LL^2(\R^3)}+ 2 \int_0^T\|\nabla\u(s)\|^2_{\LL^2(\R^3)}\ds\le C \|\u_0\|^2_{\LL^2(\R^3)}.  
$$

By interpolation, we write 
$$
\|\u(t)\|_{\LL^3(\R^3)}\le C \|\u(t)\|_{\LL^2(\R^3)}^\frac{1}{2} \|\nabla\u(t)\|^\frac{1}{2}_{\LL^2(\R^3)}.
$$
Therefore,
$$
\int_0^T\|\u(t)\|^4_{\LL^3(\R^3)}\dt\le C \|\u_0\|^4_{\LL^2(\R^3)} 
$$
and hence
$$
T \inf_{s\in[0,T]} \|\u\|^4_{\LL^3(\R^3)}\le C \|\u_0\|^4_{\LL^2(\R^3)}
$$
and
$$
 \inf_{s\in[0,T]} \|\u\|^2_{\LL^3(\R^3)}\le C T^{-\frac{1}{2}} \|\u_0\|^2_{\LL^2(\R^3)}
.  
$$
If conditions (\ref{sec4-lm2-smallness_I}) and (\ref{sec4-lm2-smallness_II}) hold, there exists $t^*\in(0,T]$ such that condition (\ref{sec4-lm2-t-star-in-L3})  is satisfied.
\end{proof}

In order for condition (\ref{sec4-lm2-smallness_I}) to hold, we need
\begin{equation}\label{sec4-smallness-w0}
1-\sqrt{1-4C^2 \|\w_0\|_{\LL^3(\mathds{R}^3)}}< \frac{1}{4},
\end{equation}
which holds from (\ref{sec4-lm1-L3-bound}).
Let us choose $\tilde\lambda$ and $\varepsilon$ such that $\vv_0=\u^{\tilde\lambda}_{0,\varepsilon}+\w_{0,\varepsilon}^{\hat\lambda}$ with $\u_0$ and $\w_0$ satisfying  (\ref{sec4-lm2-smallness_II})  and (\ref{sec4-smallness-w0}), respectively. Thus,  we arrive at  
$$
\|\vv(t^*)\|_{\LL^3(\R^3)}\le\|\w(t^*)\|_{\LL^3(\R^3)}+\|\u(t^*)\|_{\LL^3(\R^3)}< \frac{1}{4C}.
$$
Then, by Lemma \ref{sec4-lm1}, we obtain 
$$
\sup_{t\in[T,\infty)}\|\vv(t)\|_{\LL^3(\R^3)}\le \frac{1}{2C}.
$$
since $\vv(t)$ is a solution of the regularized Navier-Stokes equations as $\w$.

As a result of Theorem \ref{ESS}, we have accomplished to prove that the unregularized solutions $\vv(t)$ whose initial data $\vv_0$ can be decomposed as $\vv_0=\u_{0,\varepsilon}^{\tilde\lambda}+\w_{0,\varepsilon}^{\hat\lambda}$ with $\u_{0,\varepsilon}^{\tilde\lambda}\in\boldsymbol{\mathcal{S}}(\R^3)$ and $\w_{0,\varepsilon}^{\hat\lambda}\in\boldsymbol{\mathcal{S}}(\R^3)$ being divergence-free vector fields fulfilling (\ref{sec4-smallness-w0}) and (\ref{sec4-lm2-smallness_II}), respectively,  are strong, and hence they are smooth on $[T,\infty)$. We have used the same decomposition for $\vv_0$ as in the proof of Theorem \ref{Main}. This way our initial conditions are arbitrarily large under any critical norm. It proves Theorem \ref{Main2}.

\section{Additional results}

To complete the proof of Theorem \ref{Main} we show there exist initial data $\vv_0$ which can not be  {\it a priori} decomposed as above. To do this, we just need to use, for instance, an $\LL^3(\R^3)$-stability result. The proof combines ideas from \cite{Escauriaza-Seregin-Sverak_2003} for establishing local-in-time existence of $\LL^3(\R^3)$-solutions and from \cite{Gallagher_Iftimie_Planchon_2003} for proving stability in Bevov spaces.   
\begin{theorem} Let $\vv(t)$ be a smooth solution to (\ref{PDE-NS})--(\ref{IC}) with an initial datum $\vv_0=\u_0+\w_0$, where $\u_0$ and $\w_0$ are two smooth, divergence-free vector fields fulfilling  (\ref{sec3-lm1-smallness_1}) and (\ref{sec3-lm1-smallness_2}), respectively. Then there exists $\varepsilon=\varepsilon(\vv)$ such that, for all initial data $\tilde\vv_0$ with  $\|\vv_0-\tilde\vv_0\|_{\LL^3(\R^3)}<\varepsilon$, the corresponding solution $\tilde\vv(t)$ with $\tilde \vv(0)=\tilde\vv_0$ satisfies
\begin{equation}\label{sec5-th1-L3-stability}
\|\vv(t)-\tilde\vv(t)\|_{\LL^3(\R^3)}\le C(\vv) \|\vv_0-\tilde\vv_0\|_{\LL^3(\R^3)} \mbox{ for all }t\in[0,T].
\end{equation}
\end{theorem}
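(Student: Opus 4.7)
The plan is to derive a perturbed Navier-Stokes equation for $\z := \tilde\vv - \vv$ and control $\z$ in the Kato framework that underlies Lemma \ref{sec3-lm1}. Writing $\tilde\vv = \vv + \z$ in \eqref{NS} and subtracting the equation for $\vv$ shows that $\z$ satisfies
\begin{equation*}
\partial_t \z - \Delta \z + \mathcal{P}(\vv \cdot \nabla \z) + \mathcal{P}(\z \cdot \nabla \vv) + \mathcal{P}(\z \cdot \nabla \z) = \boldsymbol{0}, \quad \z(0) = \tilde\vv_0 - \vv_0,
\end{equation*}
whose Duhamel form is
\begin{equation*}
\z(t) = K_t * (\tilde\vv_0 - \vv_0) - \int_0^t K_{t-s} * \mathcal{P}(\vv \cdot \nabla \z + \z \cdot \nabla \vv + \z \cdot \nabla \z) \ds.
\end{equation*}

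Following the bootstrap strategy of Lemma \ref{sec3-lm1}, I would measure $\z$ in the Kato-type norm
\begin{equation*}
\mathcal{N}(\z; T) := \sup_{t \in (0, T]} \|\z(t)\|_{\LL^3(\R^3)} + \sup_{t \in (0, T]} t^{\frac{1}{2}(1-\frac{3}{q})} \|\z(t)\|_{\LL^q(\R^3)} + \sup_{t \in (0, T]} t^{\frac{1}{2}} \|\nabla \z(t)\|_{\LL^3(\R^3)},
\end{equation*}
and aim to prove $\mathcal{N}(\z; T) \le C(\vv) \|\tilde\vv_0 - \vv_0\|_{\LL^3(\R^3)}$, which implies \eqref{sec5-th1-L3-stability}. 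Proposition \ref{Pro1} already bounds the linear contribution $K_t * (\tilde\vv_0 - \vv_0)$ in each component of $\mathcal{N}$ by $C\|\tilde\vv_0 - \vv_0\|_{\LL^3(\R^3)}$. For the bilinear integrals involving $\vv$, I would insert the decomposition $\vv = \u + \w$ and use Lemma \ref{sec3-lm1}, which gives $\|\vv(s)\|_{\LL^q(\R^3)} \le \|\u_0\|_{\LL^q(\R^3)} + K s^{-\frac{1}{2}(1-\frac{3}{q})}$ and $\|\nabla \vv(s)\|_{\LL^3(\R^3)} \le \|\nabla \u_0\|_{\LL^3(\R^3)} + K s^{-\frac{1}{2}}$. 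Plugging these into \eqref{Interp_NL} and the convolution estimates of Proposition \ref{Pro1}, and evaluating the resulting beta integrals exactly as in Lemma \ref{sec3-lm1}, the integrals linear in $\z$ are controlled by $\Lambda(\vv)\,\mathcal{N}(\z; T)$ for an explicit coefficient $\Lambda(\vv)$, whereas the trilinear one contributes at most $C\,\mathcal{N}(\z; T)^2$.

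Putting everything together produces an inequality of the shape
\begin{equation*}
\mathcal{N}(\z; T) \le 3C \|\tilde\vv_0 - \vv_0\|_{\LL^3(\R^3)} + \Lambda(\vv)\,\mathcal{N}(\z; T) + C\,\mathcal{N}(\z; T)^2.
\end{equation*}
The main obstacle is to make $\Lambda(\vv) < 1$ so that a contraction-mapping argument closes the bootstrap. This is exactly where the smallness conditions \eqref{sec3-lm1-smallness_1} and \eqref{sec3-lm1-smallness_2} enter: since the proof of Lemma \ref{sec3-lm1} already forces $K \le 1/(4C)$ together with $T^{1/2} \max\{\|\u_0\|_{\LL^q(\R^3)}, \|\nabla \u_0\|_{\LL^3(\R^3)}\} < K/(4\sqrt{2})$, a careful bookkeeping of the beta-function coefficients (identical to what is done inside Lemmas \ref{sec3-lm1} and \ref{sec3-lm2}) bounds $\Lambda(\vv)$ by a fixed fraction strictly below one. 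A contraction in a small $\mathcal{N}$-ball then yields $\mathcal{N}(\z; T) \le C(\vv) \|\tilde\vv_0 - \vv_0\|_{\LL^3(\R^3)}$ whenever $\|\tilde\vv_0 - \vv_0\|_{\LL^3(\R^3)} \le \varepsilon(\vv)$ is chosen small enough to absorb the quadratic term.

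It remains to guarantee that $\tilde\vv$ is actually defined on the whole of $[0,T]$. By the Kato mild-solution theory there is a maximal smooth existence time $T_{\max}(\tilde\vv_0) > 0$; applying the bootstrap above on $[0, \min(T, T_{\max}))$ gives the uniform bound $\|\tilde\vv(t)\|_{\LL^3(\R^3)} \le M_\vv + C(\vv)\,\varepsilon(\vv)$, which via Theorem \ref{ESS} rules out any blow-up in the $\LL^3$-norm and forces $T_{\max} \ge T$. Estimate \eqref{sec5-th1-L3-stability} then holds on the full interval $[0, T]$.
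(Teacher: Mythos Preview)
Your mild-solution approach is correct but genuinely different from the paper's proof. The paper does not stay in the Kato framework: instead it derives an $\LL^3$ energy inequality by testing (\ref{sec5-th1-lab1}) against $|\w|\w$, handles the pressure via Calder\'on--Zygmund, and interpolates to control $\|\w\|_{L^5_{t,x}}$. The crucial device is then a \emph{partition} of $[0,T]$ into finitely many subintervals $[T_i,T_{i+1}]$ on which $\|\vv\|_{L^5(T_i,T_{i+1};\LL^5)}<\tfrac{1}{4C}$, followed by an induction that propagates the bound $\|\w(T_i)\|_{\LL^3}\le(2C)^i\|\w_0\|_{\LL^3}$ across the pieces; this yields $C(\vv)=(2C)^M$ and $\varepsilon(\vv)=\tfrac{1}{8C(2C)^M}$, where $M$ depends only on $\|\vv\|_{L^5(0,T;\LL^5)}$. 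In particular, the paper's argument uses the hypotheses (\ref{sec3-lm1-smallness_1})--(\ref{sec3-lm1-smallness_2}) only through the qualitative fact that $\vv\in L^5(0,T;\LL^5(\R^3))$, so it really proves stability around \emph{any} smooth solution on $[0,T]$.

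Your route, by contrast, exploits the quantitative content of (\ref{sec3-lm1-smallness_1})--(\ref{sec3-lm1-smallness_2}) directly: because $2CK<\tfrac12$ and $2CT^{1/2}\max\{\|\u_0\|_{\LL^q},\|\nabla\u_0\|_{\LL^3}\}<\tfrac12$ with the very same $C$ appearing in Lemma~\ref{sec3-lm1}, the linear coefficient $\Lambda(\vv)$ is strictly below $1$ on the whole of $[0,T]$, and a single contraction closes without any subdivision. This is shorter and stays entirely within the machinery of Lemmas~\ref{sec3-lm1}--\ref{sec3-lm2}, but it is less portable: it would not immediately give stability around a solution $\vv$ known only to lie in $L^5_{t,x}$. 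One caution worth making explicit in your write-up is that the ``careful bookkeeping'' must use $\|\u(s)\|_{\LL^q}\le\|\u_0\|_{\LL^q}$ (not the heat-smoothing bound $\|\u(s)\|_{\LL^q}\le Cs^{-\frac12(1-\frac3q)}\|\u_0\|_{\LL^3}$ used in Lemma~\ref{sec3-lm2}); otherwise the large quantity $\|\u_0\|_{\LL^3}$ enters $\Lambda(\vv)$ and the contraction fails.
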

\begin{proof} The proof is divided into two parts:

{\bf Part I}: A  priori estimates 

To start with, define $\w(t):=\vv(t)-\tilde\vv(t)$ to be the solution to 
\begin{equation}\label{sec5-th1-lab1}
\left\{
\begin{array}{rcl}
\displaystyle
\partial_t\w - \Delta\w+\nabla q+ \vv\cdot\nabla\w+\w\cdot\nabla\vv
+\w\cdot\nabla\w&=&\boldsymbol{0},
\\
\w(0)&=&\w_0:=\vv_0-\tilde\vv_0.
\end{array}
\right.
\end{equation}
Testing by $|\w|\w$, we obtain
$$
\begin{array}{rcl}
\displaystyle
\frac{1}{3}\frac{d}{dt} \|\w\|^3_{\LL^3(\R^3)}+ \||\w|^{\frac{1}{2}}\nabla \w\|_{\LL^2(\R^3)}^2+\frac{4}{9} \|\nabla|\w|^{\frac{3}{2}}\|_{\LL^2(\R^3)}^2&=&\displaystyle\int_{\R^3} \nabla q \cdot |\w|\w\,  \dx
\\
&&\displaystyle -\int_{\R^3} \nabla\cdot(\vv\w+\w\vv+\w\w)\cdot |\w|\w\,  \dx
\end{array}
$$
Integrating by parts, we estimate each term on the right hand side as follows. For the pressure term, applying  the divergence operator to (\ref{sec5-th1-lab1}), we first observe that 
\begin{equation}\label{sec5-th1-lab2}
-\Delta q=\nabla\cdot \nabla\cdot(\u\w+\u\w+\w\w)\quad\mbox{ in }\quad \R^3.
\end{equation} 
The Calderon-Zygmund inequality applied to (\ref{sec5-th1-lab2}) 
 (see \cite[Lm 5.1]{Robinson_Sadowski_2014} for a proof) implies that 
$$
\|q\|_{\LL^\frac{5}{2} (\R^3)}\le C \|\w\|_{\LL^5(\R^3)} (\|\w\|_{\LL^5(\R^3)}+\|\vv\|_{\LL^5(\R^3)}). 
$$
Thus, 
$$
\begin{array}{rcl}
\displaystyle
\int_{\R^3} \nabla q \cdot |\w|\w  \dx&=&\displaystyle-\int_{\R^3} q \nabla\cdot (\w|\w|)\dx=\int_{\R^3} q \nabla\cdot \w |\w|\dx+\int_{\R^3} q \w\nabla\w \frac{\w}{|\w|}\dx 
\\
&\le& \displaystyle \|q\|^\frac{1}{2}_{\LL^{\frac{5}{2}}(\R^3)} \|\w\|^{\frac{1}{2}}_{\LL^5(\R^3)} \||\w|^{\frac{1}{2}}\nabla \w\|_{\LL^2(\R^3)}.
\end{array}
$$
Next the interpolation inequality $\|\cdot\|_{\LL^{\frac{10}{3}}(\R^3)}\le C \|\cdot\|_{\LL^2(\R^3)}^\frac{2}{5} \|\nabla\cdot\|_{\LL^2(\R^3)}^\frac{3}{5}$ leads to
\begin{equation}\label{sec5-th1-lab3}
\|\w\|_{\LL^5(\R^3)}=\||\w|^{\frac{3}{2}}\|_{\LL^\frac{10}{3}(\R^3)}^{\frac{2}{3}}\le C \|\w\|_{\LL^3(\R^3)}^{\frac{2}{5}} \|\nabla|\w|^{\frac{3}{2}}\|_{\LL^2(\R^3)}^{\frac{2}{5}}.             
\end{equation}
From (\ref{sec5-th1-lab3}) and Young's inequality, we arrive at 
$$
\begin{array}{rcl}
\displaystyle
\int_{\R^3} \nabla q \cdot |\w|\w  \dx&\le& C \|\w\|_{\LL^3(\R^3)}^\frac{1}{2} \|\w\|_{\LL^5(\R^3)}^\frac{5}{2} (\|\w\|^\frac{5}{2}_{\LL^5(\R^3)}+\|\vv\|_{\LL^5(\R^5)}^\frac{5}{2})
\\
&&+  \gamma \||\w|^{\frac{1}{2}}\nabla \w\|_{\LL^2(\R^3)}^2+\delta \|\nabla|\w|^{\frac{3}{2}}\|^2_{\LL^2(\R^3)}.
\end{array}
$$
The other term for the pressure term is also bounded as: 
$$
\begin{array}{rcl}
\displaystyle
\int_{\R^3} q \w\nabla\w \frac{\w}{|\w|}\dx&\le& C \|\w\|_{\LL^3(\R^3)}^\frac{1}{2} \|\w\|_{\LL^5(\R^3)}^\frac{5}{2} (\|\w\|^\frac{5}{2}_{\LL^5(\R^3)}+\|\vv\|_{\LL^5(\R^5)}^\frac{5}{2})
\\
&&+  \gamma \||\w|^{\frac{1}{2}}\nabla \w\|^2_{\LL^2(\R^3)}+\delta \|\nabla|\w|^{\frac{3}{2}}\|^2_{\LL^2(\R^3)}.
\end{array}
$$
In the same way, we bound the remainder terms:
$$
\begin{array}{rcl}
\displaystyle
\int_{\R^3} \nabla\cdot(\vv\w)\cdot |\w|\w\,\dx&=&\displaystyle-\int_{\R^3} \vv\w \nabla \w |\w|\,\dx-\int_{\R^3} \vv \w \nabla\w  \frac{\w}{|\w|}\,d\x 
\\
&\le& C \|\w\|_{\LL^3(\R^3)}^\frac{1}{2} \|\w\|_{\LL^5(\R^3)}^\frac{5}{2} \|\vv\|_{\LL^5(\R^3)}^\frac{5}{2}
\\
&&+\gamma \||\w|^{\frac{1}{2}}\nabla \w\|^2_{\LL^2(\R^3)}+\delta \|\nabla|\w|^{\frac{3}{2}}\|^2_{\LL^2(\R^3)} ,
\end{array}
$$
$$
\begin{array}{rcl}
\displaystyle
\int_{\R^3} \nabla\cdot(\w\vv)\cdot |\w|\w\,\dx&\le& C \|\w\|_{\LL^3(\R^3)}^\frac{1}{2} \|\w\|_{\LL^5(\R^3)}^\frac{5}{2} \|\vv\|_{\LL^5(\R^3)}^\frac{5}{2}
\\
&&+\gamma \||\w|^{\frac{1}{2}}\nabla \w\|^2_{\LL^2(\R^3)}+\delta \|\nabla|\w|^{\frac{3}{2}}\|^2_{\LL^2(\R^3)} ,
\end{array}
$$
and 
$$
\begin{array}{rcl}
\displaystyle
\int_{\R^3} \nabla\cdot(\w\w)\cdot |\w|\w\,\dx&\le& C \|\w\|_{\LL^3(\R^3)}^\frac{1}{2} \|\w\|^5_{\LL^5(\R^3)}
\\
&&+\gamma \||\w|^{\frac{1}{2}}\nabla \w\|^2_{\LL^2(\R^3)}+\delta \|\nabla|\w|^{\frac{3}{2}}\|^2_{\LL^2(\R^3)} .
\end{array}
$$
Adjusting $\gamma$ and $\delta$ adequately, we find that 
$$
\frac{1}{3}\frac{d}{dt} \|\w\|^3_{\LL^3(\R^3)}+ \frac{1}{3} \||\w|^{\frac{1}{2}}\nabla \w\|_{\LL^2(\R^3)}^2+\frac{2}{9} \|\nabla|\w|^{\frac{3}{2}}\|_{\LL^2(\R^3)}\le C \|\w\|_{\LL^3(\R^3)}^\frac{1}{2} (\|\w\|^5_{\LL^5(\R^3)}+ \|\w\|_{\LL^5(\R^3)}^\frac{5}{2} \|\vv\|_{\LL^5(\R^3)}^\frac{5}{2}).
$$
Integrating over $(T_i,T_{i+1})$, where $\{T_i\}_{i=1}^M$ are to be determined later on,  yields 
$$
\begin{array}{rcl}
\|\w(t)\|_{\LL^3(\R^3)}^3&\le&\displaystyle \|\w(T_i)\|^3_{\LL^3(\R^3)}+ C \int_{T_i}^{T_{i+1}} \|\w\|_{\LL^3(\R^3)}^\frac{1}{2} (\|\w\|^5_{\LL^5(\R^3)}+ \|\w\|_{\LL^5(\R^3)}^\frac{5}{2} \|\vv\|_{\LL^5(\R^3)}^\frac{5}{2})  \ds
\\
&\le&\displaystyle \|\w(T_i)\|^3_{\LL^3(\R^3)}+ C \|\w(t)\|_{L^\infty({T_i}, T_{i+1};\LL^3(\R^3)}^\frac{1}{2} \int_{T_i}^{T_{i+1}} (\|\w\|^5_{\LL^5(\R^3)}+ \|\w\|_{\LL^5(\R^3)}^\frac{5}{2} \|\vv\|_{\LL^5(\R^3)}^\frac{5}{2})  \ds
\\
&\le&\displaystyle \|\w(T_i)\|^3_{\LL^3(\R^3)}+ \frac{1}{2} \|\w(t)\|_{L^\infty(T_i, T_{i+1};\LL^3(\R^3))}^3 + C \|\w\|_{L^5(T_i,T_{i+1};\LL^5(\R^3))}^6
\\
&&+ C \|\w\|_{L^5(T_i,T_{i+1};\LL^5(\R^3))}^3\|\vv\|_{L^5(T_i,T_{i+1};\LL^5(\R^3))}^3.
\end{array}
$$
In particular, this shows
\begin{equation}\label{sec5-th1-lab4}
\begin{array}{rcl}
\|\w\|_{L^\infty(T_i,T_{i+1};\LL^3(\R^3))}&\le& C \|\w(T_i)\|_{\LL^3(\R^3)}+ C \|\w\|_{L^5(T_i,T_{i+1};\LL^5(\R^3))}^2
\\
&&+ C\|\w\|_{L^5(T_i,T_{i+1};\LL^5(\R^3))}\|\vv\|_{L^5(T_i,T_{i+1};\LL^5(\R^3))},
\end{array}
\end{equation}
which implies that 
\begin{equation}\label{sec5-th1-lab5}
\begin{array}{rcl}
\displaystyle
\int_{T_i}^{T_{i+1}}(\frac{1}{2} \||\w|^{\frac{1}{2}}\nabla \w\|_{\LL^2(\R^3)}^2+\frac{2}{9} \|\nabla|\w|^{\frac{3}{2}}\|^2_{\LL^2(\R^3)}) \ds&\le& C\|\w(T_i)\|_{\LL^3(\R^3)}+C \|\w\|_{L^5(T_i,T_{i+1};\LL^5(\R^3))}^2
\\
&&+ C\|\w\|_{L^5(T_i,T_{i+1};\LL^5(\R^3))}\|\vv\|_{L^5(T_i,T_{i+1};\LL^5(\R^3))}.
\end{array}
\end{equation}
We now use (\ref{sec5-th1-lab3}) together with (\ref{sec5-th1-lab5})  to get  
\begin{equation}\label{sec5-th1-lab6}
\begin{array}{rcl}
\|\w\|_{L^5(T_i, T_{i+1}; \LL^5(\R^5))}&\le& C \|\w\|_{L^\infty (T_i, T_{i+1}; \LL^3(\R^3))}^{\frac{3}{5}} \|\nabla|\w|^\frac{3}{2}\|^\frac{2}{5}_{L^2(T_i,T_{i+1; \LL^2(\R^3))}}
\\
&\le&  C \|\w(T_i)\|_{\LL^3(\R^3)}+ C \|\w\|_{L^5(T_i,T_{i+1};\LL^5(\R^3))}^2
\\
&&+ C \|\w\|_{L^5(T_i,T_{i+1};\LL^5(\R^3))}\|\vv\|_{L^5(T_i,T_{i+1};\LL^5(\R^3))}.                             
\end{array}
\end{equation}

{\bf Part II}: Induction argument.

Since $\vv\in L^5(0,T; \LL^5(\R^3))$, there exists a finite sequence $\{T_i\}_{i=0}^M$ such that $[0,T]=\cup_{i=0}^{M-1} [T_i, T_{i+1}]$ satisfying 
\begin{equation}\label{sec5-th1-lab7}
\|\vv\|_{L^5(T_i, T_{i+1}; \LL^5(\R^3)}<\frac{1}{4 C}.
\end{equation}
where $C>0$ is the constant appearing in (\ref{sec5-th1-lab6}).

Let us consider 
\begin{equation}\label{sec5-th1-lab8}
\|\w_0\|_{\LL^3(\R^3)}\le \frac{1}{8C (2 C )^M}. 
\end{equation}
Then we claim that 
\begin{equation}\label{sec5-th1-lab9}
\|\w\|_{L^5(T_i, T_{i+1}; \LL^5(\R^5))}\le (2 C)^{i+1}\|\w_0\|_{\LL^3(\R^3)}
\end{equation}
and 
\begin{equation}\label{sec5-th1-lab10}
\|\w\|_{L^\infty(T_i, T_{i+1}; \LL^3(\R^3))}\le (2 C)^{i+1}\|\w_0\|_{\LL^3(\R^3)},
\end{equation}
for all $i\in\{0, \cdots, M-1\}$.

For $i=0$, let us suppose that there exists $K'>0$ such that 
$$
\|\w\|_{L^5(0,T_1;\LL^5(\R^3))}\le K'.   
$$
Then, from (\ref{sec5-th1-lab6}),  (\ref{sec5-th1-lab7}) and  (\ref{sec5-th1-lab8}), we find that  
$$
\|\w\|_{L^5(0,T_1;\LL^5(\R^3))}\le C \|\w_0\|_{\LL^3(\R^3)} +C (K')^2+\frac{1}{2} \|\w\|_{L^5(0,T_1;\LL^5(\R^3))}.
$$
We now impose that $K'$ satisfies 
$$
K'= 	C \|\w_0\|_{\LL^3(\R^3)} +C (K')^2+\frac{1}{2} \|\w\|_{L^5(0,T_1;\LL^5(\R^3))},
$$
which gives
$$
0<K'= \frac{\frac{1}{2}-\sqrt{\frac{1}{4}-4C^2 \|\w_0\|_{\LL^3(\mathds{R}^3)}}}{2C}.
$$
This implies the existence of $\w$ on $[0, T_1]$ provided that $\frac{1}{4}-4C^2 \|\w_0\|_{\LL^3(\mathds{R}^3}>0$ holds, which is true  due to (\ref{sec5-th1-lab8}). In particular, we have 
\begin{equation}\label{sec5-th1-lab10-bis}
\|\w\|_{L^5(0,T_1;\LL^5(\R^3))}\le \frac{1}{4 C}.
\end{equation}
In view of (\ref{sec5-th1-lab4}) and (\ref{sec5-th1-lab6}), estimates (\ref{sec5-th1-lab9}) and (\ref{sec5-th1-lab10}) are satisfied for $i=0$ by using (\ref{sec5-th1-lab10-bis}).

In general, for  $i\ge 1$, assume that (\ref{sec5-th1-lab9}) and (\ref{sec5-th1-lab10}) hold for $i-1$. Then if we argue as before,  we see that 
$$
0<K'= \frac{\frac{1}{2}-\sqrt{\frac{1}{4}-4C^2 \|\w(T_i)\|_{\LL^3(\mathds{R}^3)}}}{2C}.
$$
The induction hypothesis gives 
$$
\|\w(T_i)\|_{\LL^3(\R^3)}\le (2 C)^{i} \|\w_0\|_{\LL^3(\R^3)}< \frac{1}{8C (2C)^{M-i}} \le  \frac{1}{16 C^2}. 
$$
Thus, 
$$
\|\w\|_{L^5(T_i,T_{i+1};\LL^5(\R^3))}\le \frac{1}{4 C}.
$$
Applying this to (\ref{sec5-th1-lab4}) and (\ref{sec5-th1-lab6}), we obtain that estimates (\ref{sec5-th1-lab9}) and (\ref{sec5-th1-lab10}) hold for $i$.

To complete the proof, note, by (\ref{sec5-th1-lab10}), that 
$$
\|\w(t)\|_{\LL^3(\R^5))}\le (2 C)^M\|\w_0\|_{\LL^3(\R^3)}\quad\mbox{ for all }\quad t\in[0,T],
$$             
whence (\ref{sec5-th1-L3-stability}) holds.  
\end{proof}

The question that remains open is whether our particular solutions provided by Theorem \ref{Main}  can develop  singularities on $(T,\infty)$. Unfortunately, we are only able to give a partial answer to this question based on the following assumption. Let $\vv_0=\u_{0,\varepsilon}^{\tilde\lambda}+\w_{0,\varepsilon}^{\hat\lambda}$ be as in the proof of Theorem \ref{Main} and satisfy conditions (\ref{sec3-lm1-smallness_1}) and (\ref{sec3-lm1-smallness_2}). Then we suppose that, for each $\varepsilon>0$, there exists $\hat\lambda_0$ such that, for all $\hat\lambda\ge\hat\lambda_0$, it follows that    
\begin{equation}\label{A}
\tag{A} \left|\int_{0}^{\frac{T}{2}}\|\nabla\vv(s)\|^2_{\LL^2(\R^3)} \ds - \frac{1}{2}\|\vv_0\|^2_{\LL^2(\R^3)}\right|<\varepsilon.
\end{equation}
In other words, we shall look for initial data $\vv_0$ whose corresponding solution to (\ref{PDE-NS})--(\ref{IC}) has $\LL^2(\R^3)$-values of the vorticity, i.e. $\|\nabla\times\vv(t)\|_{\LL^2(\R^3)}=\|\nabla\vv(t)\|_{\LL^2(\R^3)}$, sufficiently high on $(0,T)$ such that assumption (\ref{A}) holds.

From our special choice of initial data $\vv_0=\u_{0,\varepsilon}^{\tilde\lambda}+\w_{0,\varepsilon}^{\hat\lambda}$, we can take $\tilde\lambda$ to tend to $\infty$ without increasing $\|\vv_0\|_{\LL^2(\Omega)}$, but $\|\nabla\vv_0\|_{\LL^2(\R^3)}$ does. Then we would expect that the vorticity does keep high via the vortex stretching mechanism for a certain period of time; and hence the kinetic energy would decay up to a certain threshold on $[0,T]$. 

\begin{theorem} Let $T>1$.  Assume that assumption (\ref{A}) holds. Then the solution $\vv(t)$ to (\ref{PDE-NS})--(\ref{IC}) provided by Theorem~\ref{Main} with $\vv_0=\u^{\tilde\lambda}_{0,\varepsilon}+\w_{0,\varepsilon}^{\hat\lambda}$ are smooth on $[0,\infty)$.
\end{theorem}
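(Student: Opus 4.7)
The plan is to turn the energy-dissipation hypothesis~(\ref{A}) into an $\LL^3(\R^3)$-smallness at the intermediate time $T/2$, and then to combine Kato's classical global $\LL^3(\R^3)$-well-posedness with Theorem~\ref{ESS}.

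First I would fix $\varepsilon>0$, to be pinned down later, and use (\ref{A}) to pick $\hat\lambda\ge\hat\lambda_0(\varepsilon)$ so that $\int_0^{T/2}\|\nabla\vv(s)\|^2_{\LL^2(\R^3)}\,\ds$ lies within $\varepsilon$ of $\tfrac{1}{2}\|\vv_0\|^2_{\LL^2(\R^3)}$. Inserting this into the energy inequality~(\ref{Energy}) at $t=T/2$ yields $\|\vv(T/2)\|^2_{\LL^2(\R^3)}\le 2\varepsilon$, so the kinetic energy at the midpoint is as small as desired.

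Next I would use the Remark after Theorem~\ref{Main} to control a higher integrability norm,
$$
\|\vv(T/2)\|_{\LL^q(\R^3)}\le (K+C\|\u_0\|_{\LL^3(\R^3)})(T/2)^{-\frac{1}{2}(1-\frac{3}{q})}=:\tilde C,
$$
the crucial point being that $\tilde C$ depends on $T$, on the fixed profile $\u_{0,\varepsilon}^{\tilde\lambda}$, and on the scaling-invariant quantity $\|\w_{0,\varepsilon}\|_{\LL^3(\R^3)}=\|\w_{0,\varepsilon}^{\hat\lambda}\|_{\LL^3(\R^3)}$, but \emph{not} on $\hat\lambda$ itself. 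Interpolating $\LL^3$ between $\LL^2$ and $\LL^q$ with $\theta=\frac{2(q-3)}{3(q-2)}\in(0,1)$ then gives
$$
\|\vv(T/2)\|_{\LL^3(\R^3)}\le \|\vv(T/2)\|_{\LL^2(\R^3)}^{\theta}\|\vv(T/2)\|_{\LL^q(\R^3)}^{1-\theta}\le (2\varepsilon)^{\theta/2}\tilde C^{1-\theta},
$$
which can be made as small as one wishes by shrinking $\varepsilon$.

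Finally I would invoke Kato's theorem: there is a universal $\delta_0>0$ such that if $\|\vv(T/2)\|_{\LL^3(\R^3)}<\delta_0$, then the Cauchy problem restarted at time $T/2$ with datum $\vv(T/2)$ admits a global mild solution $\tilde\vv\in C([T/2,\infty);\LL^3(\R^3))$; being uniformly $\LL^3$-bounded, $\tilde\vv$ is then promoted to a smooth solution on $[T/2,\infty)$ by Theorem~\ref{ESS}. Choosing $\varepsilon$ so small that $(2\varepsilon)^{\theta/2}\tilde C^{1-\theta}<\delta_0$, weak--strong uniqueness identifies $\tilde\vv$ on $[T/2,\infty)$ with the Leray--Hopf solution $\vv$, and combining with the smoothness of $\vv$ on $[0,T]$ already furnished by Theorem~\ref{Main} yields smoothness on $[0,\infty)$. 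The main hurdle is precisely the uniformity of $\tilde C$ in the scaling parameter $\hat\lambda$ whose largeness is needed to activate~(\ref{A}); the two-step quantifier ``first pick $\varepsilon$ small, then pick $\hat\lambda$ large'' would collapse if $\tilde C$ depended on $\hat\lambda$, and the fact that it does not hinges on the scaling invariance of the $\LL^3$-norm exploited throughout the construction underlying Theorem~\ref{Main}.
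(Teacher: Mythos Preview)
Your argument is correct but follows a genuinely different path from the paper's. Both proofs start identically by extracting $\|\vv(T/2)\|_{\LL^2(\R^3)}^2\le 2\varepsilon$ from (\ref{A}) and the energy identity, and both finish by restarting the Cauchy problem with small $\LL^3$-data, invoking Lemma~\ref{sec4-lm1} (Kato) and Theorem~\ref{ESS}. The divergence is in how $\LL^3$-smallness is reached. The paper does not touch the $\LL^q$-bound from the Remark; instead it runs the energy identity again on $[T/2,T]$, uses the Gagliardo--Nirenberg interpolation $\|\vv\|_{\LL^3}\le C\|\vv\|_{\LL^2}^{1/2}\|\nabla\vv\|_{\LL^2}^{1/2}$, and integrates to get $\int_{T/2}^T\|\vv(s)\|^4_{\LL^3}\,\ds\le C\|\vv(T/2)\|^4_{\LL^2}$, whence a pigeonhole yields some $t^*\in[T/2,T]$ with $\|\vv(t^*)\|_{\LL^3}$ small. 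Your route is more direct: you interpolate $\LL^3$ between $\LL^2$ and $\LL^q$ at the single time $T/2$, feeding in the pointwise $\LL^q$-decay from the Remark to land exactly at $T/2$ rather than at an unspecified $t^*$. The paper's approach is arguably more self-contained, relying only on energy estimates and thus transplantable to settings where a pointwise $\LL^q$-bound like the Remark is unavailable; yours exploits more of the structure already built in Theorem~\ref{Main} and is shorter. One small imprecision: you call $\u_{0,\varepsilon}^{\tilde\lambda}$ ``fixed'', but it varies with $\varepsilon$; what matters, and what you correctly use, is that $\|\u_{0,\varepsilon}^{\tilde\lambda}\|_{\LL^3}\le\|\boldsymbol{\vartheta}_0\|_{\LL^3}$ and $K\le\tfrac{1}{4C}$ stay bounded as $\varepsilon\to 0$, so $\tilde C$ is indeed uniform in both $\varepsilon$ and $\hat\lambda$.
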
 
\begin{proof}
From (\ref{Energy}), we find 
$$
\frac{1}{2}\|\vv(\frac{T}{2})\|^2_{\LL^2(\R^3)}+\int_{0}^{\frac{ T}{2}}\|\nabla\vv(s)\|^2_{\LL^2(\R^3)}\,\ds=
\frac{1}{2}\|\vv_0\|^2_{\LL^2(\R^3)}.
$$ 
In virtue of assumption (\ref{A}), we infer that $\|\vv(\frac{T}{2})\|_{\LL^2(\R^3)}<2\varepsilon$. Moreover, we have
$$
\frac{1}{2}\|\vv(t)\|^2_{\LL^2(\R^3)}+\int_{\frac{T}{2}}^t\|\nabla\vv(s)\|^2_{\LL^2(\R^3)}\,\ds=
\frac{1}{2}\|\vv (\frac{T}{2})\|^2_{\LL^2(\R^3)}.
$$ 
for all $t\in[\frac{T}{2}, T]$.

As in the proof of Lemma \ref{sec4-lm2}, we write 
$$
\|\vv(t)\|_{\LL^3(\R^3)}\le C \|\vv(t)\|_{\LL^2(\R^3)}^\frac{1}{2} \|\nabla\vv(t)\|^\frac{1}{2}_{\LL^2(\R^3)}.
$$
Taking the fourth power of both sides and integrating over $(\frac{T}{2}, T)$ yields 
$$
\int_{\frac{T}{2}}^{T}\|\vv(t)\|^4_{\LL^3(\R^3)}\dt\le \frac{C}{4}\|\vv(\frac{T}{2})\|^4_{\LL^2(\R^3)}. 
$$
Therefore,
$$
\frac{T}{2} \inf_{s\in[\frac{T}{2},T]} \|\vv(s)\|^4_{\LL^3(\R^3)}\le \frac{C}{4} \|\vv(\frac{T}{2})\|^4_{\LL^2(\R^3)}  
$$
and hence
$$
\inf_{s\in[\frac{T}{2},T]} \|\vv(s)\|_{\LL^3(\R^3)}\le C 2^{-\frac{1}{4}} T^{-\frac{1}{4}} \|\vv(\frac{T}{2})\|_{\LL^2(\R^3)}< C \|\vv(\frac{T}{2})\|_{\LL^2(\R^3)}.  
$$
Let us choose $\varepsilon<\frac{1}{8 C^2}$. Then we find that there exists $t^*\in(0, T]$ such that it follows that 
$$
\|\vv(t^*)\|_{\LL^3(\R^3)}<\frac{1}{4 C}.
$$ 
We are now allowed to apply Lemma \ref{sec4-lm1} to obtain that $\|\vv(t)\|_{\LL^3(\R^3)}\le \frac{1}{2C}$ for all $t\in[T,\infty)$. Moreover, we know that  $\|\vv(t)\|_{\LL^3(\R^3)}\le M_\vv$ for all $t\in[0,T]$ by Lemma~\ref{sec3-lm3}.  As a result of Theorem \ref{ESS}, the solution $\vv(t)$ is smooth globally in time.

\end{proof}

\end{document}